\providecommand{\U}[1]{\protect\rule{.1in}{.1in}}
\newtheorem{theorem}{Theorem}
\newtheorem{claim}[theorem]{Claim}
\newtheorem{corollary}[theorem]{Corollary}
\newtheorem{lemma}[theorem]{Lemma}
\newtheorem{proposition}[theorem]{Proposition}
\newenvironment{proof}[1][Proof]{\noindent\textbf{#1.} }{\ \rule{0.5em}{0.5em}}
\newcommand{\D}{D}
\title{Bessel Models for General Admissible Induced Representations: The Compact Stabilizer Case}
\begin{document}

\maketitle

\begin{abstract}
A holomorphic continuation of Jacquet type integrals for parabolic subgroups
with abelian nilradical is studied. Complete results are given for generic
characters with compact stabilizer and arbitrary representations induced from
admissible representations. A description of all of the pertinent examples is
given. These results give a complete description of the Bessel models
corresponding to compact stabilizer.

\end{abstract}
\section{Introduction.}

Classically an automorphic form on the upper half plane, $\mathcal{H}$, of
weight $k$ is a holomorphic function, $f$, satisfying a condition at infinity
(described below), on $\mathcal{H}$ such that
\[
f\left(  \frac{az+b}{cz+d}\right)  =(cz+d)^{k}f(z)
\]
for
\[
\left[
\begin{array}
[c]{cc}%
a & b\\
c & d
\end{array}
\right]  \in\Gamma
\]
with $\Gamma$ a subgroup of $SL(2,\mathbb{Z})$ of finite index. In particular, if
\[
N=\left\{  \left[
\begin{array}
[c]{cc}%
1 & x\\
0 & 1
\end{array}
\right]  |x\in\mathbb{R}\right\}
\]
then $N\cap\Gamma$ is the infinite cyclic group generated by
\[
\left[
\begin{array}
[c]{cc}%
1 & h\\
0 & 1
\end{array}
\right]
\]
with $h>0$ (called the \emph{height }of the cusp). This implies that as a
function of $z$, $f$ is periodic of period $h$. Thus if we set
\[
\tau=e^{\frac{2\pi iz}{h}}%
\]
then we can write
\[
f=\sum a_{k}\tau^{k}.
\]
The condition at infinity alluded to above implies that $a_{k}=0$ for all
$k<0$. Of particular importance are the cusp forms ($a_{k}=0$ if $k\leq0$).
Associated with such a form is the Dirichlet series (in the case when the
height of the cusp is $1$)%
\[
\sum_{n=1}^{\infty}\frac{a_{n}}{n^{z}}.
\]
These are the automorphic L-functions that have played such an important role
in the recent advances in number theory. These numbers $a_{n}$ have a
beautiful representation theoretic interpretation. The function $f$ can be
lifted to an analytic function, $\phi$, on $SL(2,\mathbb{R})$ that satisfies
\[
\phi(\gamma g)=\phi(g),\gamma\in\Gamma
\]
and if
\[
k(\theta)=\left[
\begin{array}
[c]{cc}%
\cos\theta & \sin\theta\\
-\sin\theta & \cos\theta
\end{array}
\right]  \in SO(2)
\]
then%
\[
\phi(gk(\theta))=\phi(g)e^{ik\theta}.
\]
The holomorphy condition implies that the representation of $\mathfrak{g}%
=Lie(SL(2,%
\mathbb{R}))$ on the span of the right regular derivatives of $\phi$ by the enveloping
algebra of $\mathfrak{g}$ defines an irreducible representation of
$\mathfrak{g}$ equivalent to the space of $SO(2)$-finite vectors in an
irreducible unitary representation of $SL(2,
\mathbb{R})$, $(\pi,H)$, which is an element of the holomorphic discrete series if
$k>1$. The interpretation is that if $H^{\infty}$ is the Fr\'{e}chet space of
$C^{\infty}$ vectors in $H$ then there is for each $n$ a continuous functional
on $H^{\infty}$, $W_n$, such that $W_n(f)=a_{n}$ and has the property that%
\[
W_n(\pi\left(  \left[
\begin{array}
[c]{cc}%
1 & x\\
0 & 1
\end{array}
\right]  v\right)  =e^{2\pi inx}W_n(v)
\]
for all $x$ in $\mathbb{R}$. The theory of Maass extends the class of automorphic forms to include all
types of unitary representations of $SL(2,\mathbb{R})$. The analogous functionals, $W$, are, following Jacquet, called Whittaker
functionals corresponding to the unitary character $\chi_{n}$ of $N$ given by
\[
\chi_{n}\left(  \left[
\begin{array}
[c]{cc}
1 & x\\
0 & 1
\end{array}
\right]  \right)  =e^{2\pi inx},
\]

If, we move from the upper half plane to the Siegel upper half plane,
$\mathcal{H}_{n}$, consisting of elements $Z=X+iY$ with $X$ and $Y$ symmetric $n\times n$ matrices over $\mathbb{R}$ and $Y$ positive definite. The symplectic group $G=Sp(n,
\mathbb{R})$ consisting of real $2n\times2n$ matrices with block form
\[
g=\left[
\begin{array}
[c]{cc}
A & B\\
C & D
\end{array}
\right]
\]
with $A,B,C,D$, $n\times n$ such that if
\[
J=\left[
\begin{array}
[c]{cc}%
0 & I\\
-I & 0
\end{array}
\right]  ,
\]
with $I$ the $n\times n$ identity matrix, then%
\[
gJg^{T}=J.
\]
Then $G$ acts on $\mathcal{H}_{n}$ by
\[
gZ=(AZ+B)(CZ+D)^{-1}.
\]
In this case C.L.Siegel [S] considered subgroups $\Gamma$ of finite index in
$G_{
\mathbb{Z}}=Sp(n,\mathbb{Z})$ and holomorphic functions, $f$, on $\mathcal{H}_{n}$ such that (with $g$ in
block form as above)
\[
f(gZ)=\det\left(  CZ+D\right)  ^{k}f(Z)
\]
for $g\in\Gamma$ and a growth condition at $\infty$. (In later work the factor
$\det\left(  CZ+D\right)  ^{k}$ was replaced by $\sigma(CZ+D)$ with $\sigma$ a
finite dimensional irreducible representation of $GL(n,
\mathbb{C})$) As above one has the subgroup $N$ consisting of the elements of the form%
\[
n(L)=\left[
\begin{array}
[c]{cc}%
I & L\\
0 & I
\end{array}
\right]
\]
with $L$ an $n\times n$ symmetric matrix over $
\mathbb{R}$ and $\Gamma\cap N$ contains a subgroup of finite index in $N_{\mathbb{Z}}=G_{\mathbb{Z}
}\cap N$. We will assume (as above) that $\Gamma\cap N=N_{
\mathbb{Z}
}$. Thus,
\[
f(Z+L)=f(Z)
\]
for $L$ an $n\times n$ symmetric matrix with entries in $\mathbb{Z}$. 
We can thus expand this Siegel modular form in a Fourier series.%
\[
\sum a_{S}e^{2\pi iTr(SZ)}%
\]
the sum over $S$ a symmetric $n\times n$ matrix over $\mathbb{Z}$. 
One finds that if $a_{S}\neq0$ then $S$ must be positive semi-definite. As
above these coefficients have a beautiful representation theoretic
interpretation (the one that will be expanded on in this paper). We can
consider
\[
\chi_{S}(n(X))=e^{2\pi iTr(SX)}%
\]
with $S$ symmetric $n\times n$ matrix over $\mathbb{R}$. 
We have $GL(n,\mathbb{R})$ imbedded in $G$ via
\[
g\longmapsto\left[
\begin{array}
[c]{cc}
g & 0\\
0 & g^{T}
\end{array}
\right]
\]
the image subgroup in $G$ will be denoted $M$ and $P=MN$ is the Siegel
parabolic subgroup of $G$. $M$ acts on $N$ by conjugation and hence acts on
its unitary characters. One finds that if a character is non-degenerate (that
is the $M$-orbit in the character group is open) then the character must be
given by $\chi_{S}$ with $\det S\neq0$. The stabilizer of the character is
compact only if $S$ is positive or negative definite. One can show (c.f. W[5])
that the only such models for holomorphic (resp. antiholomorphic)
representations correspond are those corresponding to positive definite (resp.
negative definite) such $S$. Thus the only generic characters that can appear
if we consider holomorphic or anti-holomorphic Siegel modular forms are the
ones with compact stabilizer. As in the case of $SL(2,\mathbb{R})$
 ($n=1$) in number theoretic applications it is necessary to study analogues
of the distributions $W$ as above on the space of smooth vectors of a general
irreducible unitary representation for the characters $\chi_{S}$ with compact stabilizer.

If $G$ is a simple group over $\mathbb{R}$ 
then we will call a subgroup with abelian unipotent radical,
\textquotedblleft Bessel type\textquotedblright\ if the unipotent radical
admits a unitary character with compact stabilizer in a Levi factor of the
parabolic subgroup. In this paper we consider spaces of functionals, $W$, on
admissible irreducible representations that admit covariants (as above) by
unitary characters with compact stabilizer (the Bessel functionals). These are
the Bessel models in our title). Our main result yields a complete
determination of Bessel modules for admissible representations induced from
Bessel parabolic subgroups. Using this theorem we deduce (using results in
section 3) an upper bounds on the dimensions of the spaces of Bessel
functionals on irreducible smooth Fr\'{e}chet representations of moderate
growth. This result is an inequality relating Bessel models to spaces of
conical vectors.

In the second section of the paper we give a complete list of simple groups
having a non-minimal parabolic subgroup with abelian nil-radical and a
character with compact stabilizer in a Levi factor. As it turns out, the class
consists of the groups whose symmetric spaces are the irreducible Hermitian
symmetric spaces of tube type of rank greater than 1. We describe the
differentials of desired characters in the next section. We feel that this
discussion is of independent interest and except for the cases corresponing to
$SO(n,2)$ (tube domains over the light cone) there is one classical family
corresponding the each skew field over $\mathbb{R}$ ($Sp_{2n}(\mathbb{R})$ 
for $\mathbb{R}$, $SU(n.n)$
 for $\mathbb{C}$
,$SO^{\ast}(4n)$ for the quaternions) and the classical simple Euclidian Jordan
algebras (the Hermitain $n\times n$ matrices over the field). In addition, the
exceptional case of $E_{7}$ is gotten from the exceptional Jordan algebra,
that is, the $3\times3$ Hermitian matrices over the octonians. The
corresponding compact stabilizers are the unitary groups over the
corresponding fields ($O(n),U(n),Sp(n)$ and compact $F_{4}$). This material is
certainly not new, however, the uniform interpretation of the unitary
characters with compact stabilizer is important to our later developments and
is beautiful in its own right. the proof that these are the only examples
would take us afield so we have therefore decided against including it in this paper.

Another novel feature of this paper is the purely algebraic material in
section 3 of this paper. Which substantially simplifies the results of [W2]
for the case of parabolic subgroups with abelian nilradical. In particular, in
this case following the argument of [W2] one can also give a simple proof of
Lynch's vanishing theorem. We do include a proof the exactness of the related
spaces of covariants directly from the results in this section.

We will now say a few words about the details main results in this paper. Let
$G$ be a real reductive group with maximal compact subgroup $K$. Let $P=MAN$
be a parabolic subgroup with given Langlands decomposition. Let $(\sigma
,H_{\sigma})$ be an admissible, finitely generated, smooth Fr\'{e}chet
representation of moderate growth of $M$ and let $\nu$ be a complex valued
linear functional on $Lie(A)$. Let $I_{P,\sigma,\nu}^{\infty}$ be the smooth
induced representation of $G$ from the representation, $\sigma_{\nu}$, of $P$
given by $\sigma_{\nu}(man)=e^{\nu(\log a)}\sigma(m)$. Let $\overline
{P}=MA\overline{N}$ be the opposite parabolic subgroup (corresponding to the
given Langlands decomposition). Let $\chi$ be a unitary (one dimensional
character) of $\overline{N}$. We consider the integrals%
\[
J_{P,\sigma,\nu}^{\chi}(f)=\int_{\overline{N}}\chi(\overline{n})^{-1}%
f_{P,\sigma,\nu}(\overline{n})d\overline{n}%
\]
where $d\overline{n}$ is a choice of Haar measure on $\overline{N}$ and
\[
f_{P,\sigma,\nu}(namk)=e^{\nu(\log a)}\delta_{P}(a)^{\frac{1}{2}}%
\sigma(m)f(k)
\]
for $f$ in the representation induced by $\sigma_{|K\cap M}$ from $K\cap M$ to
$K,I_{M\cap K,\sigma_{|M\cap K}}^{\infty}$, and $\delta_{P}$ is the modular
function of $P$. These integrals converge uniformly on compacta for
$\operatorname{Re}\nu$ in a suitable translate of the positive cone. In
earlier work the second named author and various other researchers have
studied the analytic continuation of these integrals to all of
$Lie(A)_{\mathbb{C}}^{\ast}$ in the case when $\chi$ is generic (i.e. the
$M$-orbit is open) in the case when $\sigma$ is finite dimensional and $P$
satisfies appropriate conditions. The most general results in this direction
can be found in [W1] for earlier work the reader should consult the references
therein. In this paper we embark on the problem of implementing this
continuation in the case when $H_{\sigma}$ is infinite dimensional. One allow
this condition to handle all admissible irreducible smooth Fr\'{e}chet modules
of moderate growth and also to handle applications to number theory involving
general representations. Here the situation becomes much more complicated
analytically. We will confine ourselves to a special case of what we call a
Bessel model. That is, $N$ is commutative and the stabilizer of $\chi$ in $M$,
$M_{\chi}$, is compact. One can show that in this case $M_{\chi}$ is a
symmetric subgroup of $M$. Also, in this case, there is an element, $w_{M}$,
in the normalizer of $A$ in $K$ that conjugates $P$ to $\overline{P}$. We will
thus consider a generic character, $\chi$, of $N$ and rewrite the integral
above as%
\[
J_{P,\sigma,\nu}^{\chi}(f)=\int_{N}\chi(n)^{-1}f_{P,\sigma,\nu}(w_{M}n)dn.
\]
We prove a holomorphic continuation for these integrals . We also prove that every Bessel model of $I_{P,\sigma,\nu
}^{\infty}$ factors through these integrals. If we fix a $M_{\chi}$ type then
these results allow us to calculate the dimension of the space of such models.
For example, if a $M_{\chi}$ type has multiplicity $1$ in $\sigma$ then the
dimension of the corresponding space is $1$. In section 6 we derive upper
bounds on the dimensions of the spaces transforming by a given representation
of $M_{\chi}$.

There are several new difficulties that arise in the more general case of
noncompact stabilizer including the fact that there is more than one open
double coset. These questions will be addressed in forthcoming work of the
first named author. We should also point out that the aspect of our work that
implies multiplicity one has overlap with work of Jiang,Sun and Zhu [JSZ] in
the special case of $SO(n,2)$

We would like to thank Dipendra Prasad for initiating this research by asking
whether the results as described are true for $GSp_{4}(\mathbb{R})$. Since its
commutator group is the rank 2 group corresponding to $\mathbb{R}$ below, this
paper answers his question in the affirmative.

\section{A class of examples.} \label{sec:examples}

In this section we will describe a collection of real reductive groups for
which the results of this paper apply. Let $G$ be a connected simple Lie group
with finite center and let $K$ be a maximal compact subgroup. We assume that
$G/K$ is Hermitian symmetric of tube type. This can be interpreted as follows.
There exists a group homomorphism, $\phi$, of a finite covering, $S$ of $PSL(2,\mathbb{R})$
into $G$ such that if $H=\phi(S)$
then $H\cap K$ is the center of $K$. We take a standard basis $h,e,f$ of
$Lie(H)$ over
$\mathbb{R}$
 with the standard TDS (three dimensional simple) commutation relations
($[e,f]=h,[h,e]=2e,[h,f]=-2f$). If $\mathfrak{g}=Lie(G)$ we have
$\mathfrak{g}=\mathfrak{\bar{n}\oplus m\oplus n}$ with $\mathfrak{\bar{n}%
,m,n}$ respectively the $-2,0,2$ eigenspace of $ad(h).$ In particular,
$\mathfrak{\bar{n}}$ and $\mathfrak{n}$ are commutative and $e\in\mathfrak{n}%
$, $f\in\mathfrak{\bar{n}}$. Let $\theta$ be the Cartan involution of $G$
corresponding to the choice of $K$ then we may assume, $\theta\mathfrak{n=\bar
{n}}$, $f=-\theta e$ and
$\mathbb{R}(e-f)=Lie(H\cap K)$. Set $\mathfrak{p=}$ $\mathfrak{m\oplus n}$ and let
$P=\{g\in G|Ad(g)\mathfrak{p}=\mathfrak{p\}}$. Then $P$ is a parabolic
subgroup of $G$. Let $M=\{g\in G|Ad(g)h=h\}$. Then $M$ is a Levi factor of $P
$ and $N=\exp(\mathfrak{n})$ is its unipotent radical. With this notation in
place we can describe some remarkable properties of these spaces.

\begin{lemma}
Let $M^{f}=\{m\in M|Ad(m)f=f\}$. Then $M^{f}=M\cap K$.
\end{lemma}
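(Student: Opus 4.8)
The plan is to establish the two inclusions separately, in each case using the principle that, for $m\in M$ (so $Ad(m)h=h$ automatically), the three conditions ``$Ad(m)f=f$'', ``$Ad(m)$ fixes each of $h,e,f$'', and ``$Ad(m)(e-f)=e-f$'' are all equivalent. Two elementary structural facts drive this. First, since the eigenvalues of $ad(h)$ on $\mathfrak{g}$ are exactly $-2,0,2$, as a module for the triple $h,e,f$ the Lie algebra $\mathfrak{g}$ is a direct sum of trivial modules and copies of the $3$-dimensional adjoint module, with $\mathfrak{n}$ the span of the top weight spaces of the adjoint summands; hence $ad(f)\colon\mathfrak{n}\to\mathfrak{m}$ is injective, and of course $\mathfrak{n}\cap\bar{\mathfrak{n}}=0$. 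Second, $\mathbb{R}(e-f)=Lie(H\cap K)=\mathfrak{z}(\mathfrak{k})$, and $Z_{G}(Z(K)^{\circ})=K$: the centralizer of a torus in a connected Lie group is connected, and its Lie algebra here is the $ad(h_0)$-fixed part $\mathfrak{k}$ for $h_0$ spanning $\mathfrak{z}(\mathfrak{k})$, which forces equality with $K$.

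For $M\cap K\subseteq M^{f}$: take $m\in M\cap K$. Conjugation by $m\in K$ fixes the center of $K$, so $Ad(m)$ is the identity on $\mathfrak{z}(\mathfrak{k})=\mathbb{R}(e-f)$; thus $Ad(m)(e-f)=e-f$. Since $Ad(m)h=h$, the map $Ad(m)$ preserves the $ad(h)$-eigenspace decomposition, so $Ad(m)e\in\mathfrak{n}$ and $Ad(m)f\in\bar{\mathfrak{n}}$. Then $Ad(m)e-e=Ad(m)f-f$ lies in $\mathfrak{n}\cap\bar{\mathfrak{n}}=0$, giving $Ad(m)f=f$, i.e. $m\in M^{f}$. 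For the reverse inclusion, take $m\in M^{f}$, so $Ad(m)h=h$ and $Ad(m)f=f$. Again $Ad(m)e\in\mathfrak{n}$, and $[Ad(m)e-e,f]=[Ad(m)e,Ad(m)f]-[e,f]=Ad(m)h-h=0$, so $Ad(m)e-e$ lies in the kernel of $ad(f)|_{\mathfrak{n}}$ and hence is $0$. Thus $Ad(m)$ fixes $h,e,f$, in particular $Ad(m)(e-f)=e-f$, so $m$ centralizes $\exp(\mathbb{R}(e-f))=Z(K)^{\circ}$; by the second fact above $m\in K$, and therefore $m\in M\cap K$.

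The genuinely structural inputs are the identification $Z_{G}(Z(K)^{\circ})=K$ (the Hermitian/tube-type grading information built into the hypotheses on $\phi$ and $H\cap K$) and the injectivity of $ad(f)\colon\mathfrak{n}\to\mathfrak{m}$; I expect the latter to be the step that requires the most care to state cleanly, since it is exactly the place where the special nature of this parabolic (all $ad(h)$-eigenvalues in $\{-2,0,2\}$) is used. Everything else is routine bookkeeping with the three $ad(h)$-eigenspaces and the relations $[e,f]=h$, $f=-\theta e$, $\theta\mathfrak{n}=\bar{\mathfrak{n}}$.
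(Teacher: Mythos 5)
Your proof is correct and takes essentially the same route as the paper's: both directions rest on the injectivity of $ad(f)\colon\mathfrak{n}\to\mathfrak{m}$ coming from the $\{-2,0,2\}$ grading, and on identifying $K$ with the centralizer of $e-f$ (equivalently of $Z(K)^{\circ}$). The only cosmetic differences are that you spell out the justification of $Z_{G}(Z(K)^{\circ})=K$ which the paper leaves implicit, and for $M\cap K\subseteq M^{f}$ you use $\mathfrak{n}\cap\bar{\mathfrak{n}}=0$ where the paper instead brackets with $[h,e-f]$ to pick up $e+f$; these are interchangeable steps.
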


\begin{proof}
We note that as a representation of $H$ under $ad$, $\mathfrak{g}\cong
aF^{0}\oplus bF^{2}$. Here $F^{k}$ is an irreducible representation of
$Lie(H)$ of dimension $k+1$ (i.e. highest weight $k$). Here $b=\dim
\mathfrak{n}$ and $a=\dim\mathfrak{g}-3b$. On the other hand $\mathfrak{m}$ is
the zero weight space for the action of $h.$ Furthermore, using standard
representation theory of a TDS we see that if $F$ is a TDS module and if $v\in
F$ is such that $hv=0$ and $fv=0$ then $ev=0$. this implies that $a=\dim
M^{f}$. We also note that the same argument implies that $adf:\mathfrak{n}%
\rightarrow\mathfrak{m}$ is injective. Since $Ad(m)f=f$ and $Ad(m)h=h$ for
$m\in M^{f}$ we see that%
\[
\lbrack Ad(m)e,f]=[Ad(m)e,Ad(m)f]=Ad(m)h=h
\]
so%
\[
\lbrack f,e-Ad(m)e]=0.
\]
Hence $Ad(m)e=e$ for $m\in M^{f}$. This implies that $M^{f}\subset K=\{g\in
G|Ad(g)(e-f)=(e-f)\}$. We note that $M\cap K$ is the group of all $m\in G$
such that $Ad(g)h=h$ and $Ad(g)(e-f)=e-f$ (and hence $Ad(g)[h,e-f]=[h,e-f]$ ).
Thus $M\cap K\subset M^{f}$.
\end{proof}

We now give a complete listing of the examples:

1. $G=Sp(n,\mathbb{R})$ realized as $2n\times2n$ matrices such that $gJ_{n}g^{T}=J_{n}$ with%
\[
J_{n}=\left[
\begin{array}
[c]{cc}%
0 & I_{n}\\
-I_{n} & 0
\end{array}
\right]
\]
with $I_{n}$ the $n\times n$ identity matrix (upper $T$ means transpose).
$\theta(g)=(g^{-1})^{T}$. Thus identifying $\mathbb{R}^{2n}$ with
$\mathbb{C}$
using $J_{n}$ for the complex structure, $K=U(n)=G\cap O(2n)$. Here
\[
e=\left[
\begin{array}
[c]{cc}%
0 & I_{n}\\
0 & 0
\end{array}
\right], f=\left[
\begin{array}
[c]{cc}%
0 & 0\\
I_{n} & 0
\end{array}
\right], h=\left[
\begin{array}
[c]{cc}%
I_{n} & 0\\
0 & -I_{n}%
\end{array}
\right]  .
\]
Thus
\[
M=\left\{  \left[
\begin{array}
[c]{cc}%
g & 0\\
0 & (g^{-1})^{T}%
\end{array}
\right]  |g\in GL(n,%
\mathbb{R}
)\right\}  ,
\]%
\[
N=\left\{  \left[
\begin{array}
[c]{cc}%
I & X\\
0 & I
\end{array}
\right]  |X\in M(n,%
\mathbb{R}
),X^{T}=X\right\}  .
\]
Finally, $M^{f}=O(n)\subset U(n)$.

2. $G=SU(n,n)$ realized as the $2n\times2n$ complex matrices, $g$, such that
$gL_{n}g^{\ast}=L_{n}$ with%
\[
L_{n}=\left[
\begin{array}
[c]{cc}%
0 & iI_{n}\\
-iI_{n} & 0
\end{array}
\right]  .
\]
Then $K=U(2n)\cap G\cong S(U(n)\times U(n))$ the center of $K$ is the set of
all
\[
\left[
\begin{array}
[c]{cc}%
\left(  \cos\theta\right)  I_{n} & \left(  \sin\theta\right)  I_{n}\\
-\left(  \sin\theta\right)  I_{n} & \left(  \cos\theta\right)  I_{n}%
\end{array}
\right]
\]
with $\theta\in%
\mathbb{R}
$. We take the same $e,f,h$ as in the case of $Sp(n,%
\mathbb{R}
)$. The centralizer, $M$, of $h$ in $G$ is the set of all%
\[
\left[
\begin{array}
[c]{cc}%
g & 0\\
0 & (g^{\ast})^{-1}%
\end{array}
\right]
\]
with $g\in GL(n,%
\mathbb{C}
).$ $M^{f}$ is the set of elements in $M$ with $g\in U(n).$

3. $G=SO^{\ast}(4n)$ realized as the group of all $g\in SO(4n,%
\mathbb{C}
)$ such that $gJ_{2n}g^{\ast}=J_{2n}$. Here $K$ is isomorphic with $U(2n)$.
Here $K=G\cap S0(4n,%
\mathbb{R}
)=Sp(2n,%
\mathbb{R}
)\cap S0(4n,%
\mathbb{R}
)\cong U(2n)$. \ We can describe $\mathfrak{g}=Lie(G)$ as a Lie subalgebra of
$M_{2n}(\mathbb{H})$ as the matrices in block form%
\[
\left[
\begin{array}
[c]{cc}%
A & X\\
Y & -A^{\ast}%
\end{array}
\right]
\]
with $A,X,Y\in M_{n}(\mathbb{H})$ and $X^{\ast}=X,Y^{\ast}=Y$. In this form
$\mathfrak{g}\cap M_{2n}(\mathbb{R})=Lie(Sp(n,%
\mathbb{R}
))$. \ We take $e,f,h$ as above and note that $M\cong GL(n,\mathbb{H})$ and
$M^{f}=Sp(n)$ the quaternionic unitary group.

4. $G$ the Hermitian symmetric real form of $E_{7}$. In this case we will
emphasize a decomposition of $Lie(G)$ which makes it look exactly like those
examples 1.,2., and 3.. In each of those cases we have
\[
Lie(G)=\left[
\begin{array}
[c]{cc}%
A & X\\
Y & -A^{\ast}%
\end{array}
\right]
\]
with $A$ an element of $M_{n}(F)$ and $F=%
\mathbb{R}
,%
\mathbb{C}
$ or $\mathbb{H}$ the upper * is the conjugate (of the field) transposed.
Furthermore, $X,Y$ are elements of $M_{n}(F)$ that are self adjoint. Example 5
corresponds to the octonions, $\mathbb{O}$. Here we replace $M_{3}%
(\mathbb{O)}$ by $\mathfrak{m}=%
\mathbb{R}
\oplus E_{6,2}$ (the real form of real rank 2 with maximal compact of type
$F_{4}$). We take for $X,Y$ elements of the exceptional Euclidean Jordan
algebra (the $3\times3$ conjugate adjoint matrices over $\mathbb{O}$ with
multiplication $A\cdot B=\frac{1}{2}(AB+BA)$ thus in this case the $X$'s and
$Y$'s are defined in the same way for the octonions as for the other fields).
Here $\mathfrak{m}$ acts by operators that are a sum of Jordan multiplication
and a derivation of the Jordan algebra (the derivations defining the Lie
algebra of compact $F_{4}$). With this notation our choice of $e,f,h$ are
exactly the same as the examples for $%
\mathbb{R}
,%
\mathbb{C}
$ or $\mathbb{H}$.

There is one more example (that doesn't fit this beautiful picture).

5. $G=SO(n,2)_{o}$ realized as the identity component of the group $n+2$ by
$n+2$ matrices that leave invariant the form
\[
\left[
\begin{array}
[c]{cc}%
I_{n} & 0\\
0 & -I_{2}%
\end{array}
\right]  .
\]
Here
\[
h=\left[
\begin{array}
[c]{cccc}%
0_{n-1} & 0 & 0 & 0\\
0 & 0 & 2 & 0\\
0 & 2 & 0 & 0\\
0 & 0 & 0 & 0
\end{array}
\right]
\]
where the $0$'s on the edges are either $n-1\times1$ or $1\times n-1$. Also%
\[
e=\left[
\begin{array}
[c]{cccc}%
0_{n-1} & 0 & 0 & 0\\
0 & 0 & 0 & 1\\
0 & 0 & 0 & 1\\
0 & 1 & -1 & 0
\end{array}
\right]  ,f=\left[
\begin{array}
[c]{cccc}%
0_{n-1} & 0 & 0 & 0\\
0 & 0 & 0 & 1\\
0 & 0 & 0 & -1\\
0 & 1 & 1 & 0
\end{array}
\right]  .
\]
Here
\[
K=\left[
\begin{array}
[c]{cc}%
k_{1} & 0\\
0 & k_{2}%
\end{array}
\right]
\]
with $k_{1}\in SO(n)$ and $k_{2}=SO(2).$ Thus $e-f$ is an infinitesimal
generator of the center of $K$ (if $n>2$). $M$ is isomorphic with
\[%
\mathbb{R}
^{\times}\times_{\mu_{2}}O(n-1,1)
\]
and
\[
M^{f}\cong O(n-1).
\]

\section{The tensoring with finite dimensional representations.}\label{sec:tensoring}

Let $\mathfrak{g}$ be a reductive Lie algebra over $%
\mathbb{C}
$. Choose an invariant symmetric, nondegenerate, bilinear form, $(...,...)$,
on $\mathfrak{g.}$ Let $e,f,h\in\mathfrak{g}$ span a TDS. We assume that $adh $
has eigenvalues $-2,0,2$. Let $\mathfrak{g}_{j}=\{x\in\mathfrak{g}%
|[h,x]=jx\}$. Set $\mathfrak{m=g}_{0}$ and $\mathfrak{n}=\mathfrak{g}_{2}$ and
$\mathfrak{p}=\mathfrak{m}\oplus\mathfrak{n}$. Let $\chi(x)=(f,x)$ for
$x\in\mathfrak{n}$. Let $\mathcal{M}_{\chi}$ denote the full subcategory of
$\mathfrak{g}$-modules such that the elements $x-\chi(x)$ act locally
nilpotently for $x\in\mathfrak{n}$. If $M\in\mathcal{M}_{\chi}$ then we set
$M_{0}=\{0\}$ and $M_{j+1}=\{m\in M|(x-\chi(x))m\in M_{j}\}$ for $j\geq0$.
Then we have%
\[
\cup_{j\geq0}M_{j}=M,M_{j}\subset M_{j+1}\text{.}%
\]
We note that $M_{1}=\{m\in M|xm=\chi(x)m,x\in\mathfrak{n}\}.$

We define a bilinear form $q$ on $\mathfrak{n}$ by
\[
q(u,v)=-(adf^{2}u,v)=([f,u],[f,v]).
\]
since $adf^{2}:\mathfrak{g}_{2}\rightarrow\mathfrak{g}_{-2}$ is injective and
$(...,...)$ defines a perfect pairing between $\mathfrak{g}_{2}$ and
$\mathfrak{g}_{-2}$ we see that $q$ is nondegenerate and symmetric. Let
$x_{1},...,x_{d}$ be an orthonormal basis of $\mathfrak{n}$ relative to $q$.
We set%
\[
Q=\sum_{i=1}^{d}[f,x_{i}](x_{i}-\chi(x_{i})).
\]
Then $Q$ depends only on $e,f,h$ and the choice of $(...,...)$. We normalize
$(...,...)$ so that $(e,f)=1$.

\begin{lemma}
There exist elements $x_{ij}\in\mathfrak{n,}$ $1\leq i,j\leq d$ such that
$\chi(x_{ij})=0$ and $[x_{i},[f,x_{j}]]=\delta_{ij}e+x_{ij}$.
\end{lemma}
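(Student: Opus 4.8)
The plan is to produce $x_{ij}$ explicitly, namely to set $x_{ij}:=[x_i,[f,x_j]]-\delta_{ij}e$, and then to verify the two assertions: that this element lies in $\mathfrak{n}$, and that $\chi(x_{ij})=0$. The displayed identity $[x_i,[f,x_j]]=\delta_{ij}e+x_{ij}$ will then hold by construction.

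The first point is pure grading bookkeeping. Since $x_i,x_j\in\mathfrak{n}=\mathfrak{g}_2$ and $f\in\mathfrak{g}_{-2}$, we have $[f,x_j]\in\mathfrak{g}_0=\mathfrak{m}$, hence $[x_i,[f,x_j]]\in[\mathfrak{g}_2,\mathfrak{g}_0]\subseteq\mathfrak{g}_2=\mathfrak{n}$; and $e\in\mathfrak{g}_2$ as well, so indeed $x_{ij}\in\mathfrak{n}$ and the bracket formula is just the definition of $x_{ij}$.

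For the second point I would compute $\chi([x_i,[f,x_j]])$ directly. By definition $\chi(y)=(f,y)$ for $y\in\mathfrak{n}$, and moving the outer bracket using invariance and skewness of $ad(x_i)$ with respect to $(\cdot,\cdot)$ gives
\[
\chi([x_i,[f,x_j]])=(f,[x_i,[f,x_j]])=([f,x_i],[f,x_j])=q(x_i,x_j)=\delta_{ij},
\]
the last step being the choice of $x_1,\dots,x_d$ as an orthonormal basis of $\mathfrak{n}$ for $q$. Since $\chi(e)=(f,e)=1$ by the normalization $(e,f)=1$, we conclude $\chi(x_{ij})=\delta_{ij}-\delta_{ij}=0$.

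I do not anticipate a genuine obstacle; this is essentially a two-line computation. The only places that call for a little care are the bracket placement when invoking invariance of the form — so that one lands on $([f,x_i],[f,x_j])$, which is $q(x_i,x_j)$ by definition, rather than on its negative — and the tacit observation that $\chi$ does not vanish on $e$, so that $\mathfrak{n}=\mathbb{C}e\oplus(\mathfrak{n}\cap\ker\chi)$ and $x_{ij}$ is in fact the unique correction term making this decomposition explicit for the element $[x_i,[f,x_j]]$.
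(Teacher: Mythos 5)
Your proposal is correct and follows essentially the same route as the paper: the paper's proof is just the one-line computation $([x_i,[f,x_j]],f)=([f,x_j],[f,x_i])=\delta_{ij}$, which is exactly your pairing calculation, with the remaining points (grading into $\mathfrak{n}$, subtracting $\delta_{ij}e$) left implicit. Your version simply spells out the bookkeeping the paper calls "directly from the definitions."
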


\begin{proof}
This follows directly form the definitions. Indeed, $([x_{i},[f,x_{j}%
]],f)=([f,x_{j}],[f,x_{i}])=\delta_{ij}$.
\end{proof}

If $v\in\mathfrak{n}$ we will use the notation $v^{\prime}$ for $v-\chi(v) $.

\begin{lemma}
If $u\in\mathfrak{m}$ then $uM_{j}\subset M_{j+1}$.
\end{lemma}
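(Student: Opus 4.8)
The statement to prove is that if $u \in \mathfrak{m}$ then $uM_j \subset M_{j+1}$, where $M_j$ are the filtration pieces of a module $M \in \mathcal{M}_\chi$. The plan is to induct on $j$. The base case $j=0$ is vacuous since $M_0 = \{0\}$. For the inductive step, suppose we know $uM_j \subset M_{j+1}$ for all $u \in \mathfrak{m}$, and take $m \in M_{j+1}$. By definition, to show $um \in M_{j+2}$ we must check that $(x - \chi(x))(um) \in M_{j+1}$ for every $x \in \mathfrak{n}$.

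The key computation is to move $x' = x - \chi(x)$ past $u$ using the bracket. Write $x'(um) = u(x'm) + [x', u]m = u(x'm) + [x,u]m$, the last equality because $\chi(x)$ is a scalar and so $[x-\chi(x), u] = [x,u]$. Now $[x,u] \in \mathfrak{g}_2 = \mathfrak{n}$ since $x \in \mathfrak{g}_2$ and $u \in \mathfrak{g}_0$ (the grading is compatible with the bracket: $[\mathfrak{g}_0, \mathfrak{g}_2] \subset \mathfrak{g}_2$). Hence $[x,u] = y$ for some $y \in \mathfrak{n}$, and $ym = y'm + \chi(y)m$. Since $m \in M_{j+1}$ we have $y'm \in M_j \subset M_{j+1}$ and $\chi(y)m \in M_{j+1}$, so $[x,u]m \in M_{j+1}$. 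For the other term, $x'm \in M_j$ because $m \in M_{j+1}$, and then by the inductive hypothesis $u(x'm) \in M_{j+1}$. Therefore $x'(um) \in M_{j+1}$ for all $x \in \mathfrak{n}$, which is exactly the statement $um \in M_{j+2}$, completing the induction.

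I do not anticipate a genuine obstacle here: the argument is a routine induction once one observes that $\mathfrak{m}$ normalizes $\mathfrak{n}$ under the grading. The only point requiring a moment's care is bookkeeping — that the term $[x,u]m$ lands in $M_{j+1}$ and not merely in $M_{j+2}$ — and this is handled by the fact that $m$ itself lies in $M_{j+1}$, so even the non-$\chi$ part $\chi(y)m$ of $ym$ stays at level $j+1$. One should also note that the inductive hypothesis is applied to $x'm$, which sits in $M_j$, so the increase in filtration degree from applying $u$ is absorbed within the bound $M_{j+1}$.
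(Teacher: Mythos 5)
Your proof is correct and rests on the same mechanism as the paper's: commuting $u$ past the operators $x'=x-\chi(x)$, using $[\mathfrak{m},\mathfrak{n}]\subset\mathfrak{n}$ and the splitting $[x,u]=[x,u]'+\chi([x,u])$. The only difference is organizational — you run an induction on $j$ via the recursive definition of $M_{j+1}$, whereas the paper does the same computation in one step on a string of $j+1$ primed operators applied to $um$ — so the two arguments are essentially identical.
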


\begin{proof}
Let $z_{1},...,z_{j+1}\in\mathfrak{n}$. We must show that $z_{1}^{^{\prime}%
}\cdots z_{j+1}^{^{\prime}}um=0$ if $m\in M_{j}$. We write%
\[
z_{1}^{^{\prime}}\cdots z_{j+1}^{^{\prime}}um=uz_{1}^{^{\prime}}\cdots
z_{j+1}^{^{\prime}}m-\sum_{i=1}^{j+1}z_{1}^{\prime}\cdots\lbrack
u,z_{i}]\cdots z_{j+1}^{^{\prime}}m.
\]

Writing $[u,z_{i}]$ as $[u,z_{i}]^{\prime}+\chi([u,z_{i}])$ and using the fact
that $m\in M_{j}$ we have%
\[
z_{1}^{^{\prime}}\cdots z_{j+1}^{^{\prime}}um=-\sum_{i}\chi([u,z_{i}%
])z_{1}^{\prime}\cdots z_{i-1}^{\prime}z_{i+1}^{\prime}\cdots z_{j+1}%
^{^{\prime}}m=0.
\]

Which is $0$ for the same reason
\end{proof}

We now come to the main Lemma of this section (which due to our assumptions in
this section is a substantial simplification of the results in [W1]).

\begin{proposition}
If $m\in M_{s+1}$ then
\[
(Q-s)m\in M_{s}.
\]

\end{proposition}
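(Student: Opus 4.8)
The plan is to compute directly the action of $Q = \sum_i [f,x_i]\, x_i'$ on an element $m \in M_{s+1}$, showing that after applying $s+1$ factors of the form $z'$ (for $z \in \mathfrak{n}$) the result vanishes. First I would let $z_1,\dots,z_{s+1} \in \mathfrak{n}$ and examine $z_1' \cdots z_{s+1}' Q m$. Since each $[f,x_i] \in \mathfrak{m}$, Lemma (the one showing $u M_j \subset M_{j+1}$) already tells us $Qm \in M_{s+2}$, so this is not immediately zero; the point is to extract the scalar $s$. The key computational step is to move each $[f,x_i]$ leftward past the $z_j'$, picking up commutators $[z_j, [f,x_i]]$, and to move the factor $x_i'$ appropriately, so that one can recognize the sum $\sum_i$ as producing, via Lemma (the one giving $[x_i,[f,x_j]] = \delta_{ij} e + x_{ij}$ with $\chi(x_{ij}) = 0$), a copy of $e' = e - \chi(e) = e - 1$ or similar, which on $M_1$-type quotients contributes the counting.

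More concretely, the natural approach is induction on $s$. For the base case $s = 0$: if $m \in M_1$ then $x_i' m = 0$ for all $i$, so $Qm = 0 = 0\cdot m$, and we must check $Qm \in M_0 = \{0\}$, i.e. $Qm = 0$; but $Qm = \sum_i [f,x_i] x_i' m = 0$ since $x_i' m = 0$. So the base case is immediate. For the inductive step, given $m \in M_{s+1}$, note $x_i' m \in M_s$ for each $i$. I would compute $(Q-s)m$ by writing $Q m = \sum_i [f,x_i] x_i' m$ and trying to commute $Q$ with the $z'$-factors, using the two earlier Lemmas to evaluate the commutators $[[f,x_i], z]$ appearing — these lie in $\mathfrak{n}$ (since $[\mathfrak{m},\mathfrak{n}] \subset \mathfrak{n}$), and one rewrites them via $\chi$. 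The heart of the matter is that when one reassembles $\sum_i [f,x_i][x_i, \cdot\,]$ acting on the relevant slot, Lemma giving $[x_i,[f,x_j]] = \delta_{ij}e + x_{ij}$ collapses the double sum, the $x_{ij}$-terms vanish because $\chi(x_{ij}) = 0$ and they can be absorbed into lower filtration, and the $\delta_{ij}e$ term produces the integer $s$ by counting the $s+1$ slots minus a correction.

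The main obstacle I anticipate is the bookkeeping of which commutator terms land in which filtration level $M_j$, and in particular verifying that all the "error" terms — those involving $x_{ij}$, or iterated commutators $[[f,x_i],z]$ — either vanish outright by $\chi(x_{ij}) = 0$ together with $m \in M_{s+1}$, or contribute only to $M_s$ and hence are harmless. One has to be careful that the form $q$ and the orthonormal basis $x_1,\dots,x_d$ enter correctly: the identity $([x_i,[f,x_j]],f) = ([f,x_j],[f,x_i]) = q(x_j,x_i) = \delta_{ij}$ is the linchpin, since it is exactly what makes $\sum_i [f,x_i]\,[x_i,-\,]$ behave like multiplication by $e$ modulo terms killed by $\chi$. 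I would organize the computation so that one first establishes the $s=0$ case trivially, then for general $m \in M_{s+1}$ applies the identity $z' m \in M_s$ (valid since $m \in M_{s+1}$ means $z' m \in M_s$ by definition) and the Lemma $u M_s \subset M_{s+1}$, reducing $(Q-s)m \in M_s$ to an identity about how $Q$ shifts filtration, which then follows by a single pass of commuting $Q$ through one $z'$-factor and invoking the $\delta_{ij}$ identity.
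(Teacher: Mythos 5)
Your strategy is the same as the paper's: move $Q$ past the primed factors, use the identity $[x_i,[f,x_j]]=\delta_{ij}e+x_{ij}$ with $\chi(x_{ij})=0$ to collapse the double sum, and let the $\delta_{ij}e$ terms produce the scalar. But the first paragraph has an off-by-one confusion that would derail the computation if carried out as stated. To show $(Q-s)m\in M_s$ you must test against $s$ primed factors $z_1'\cdots z_s'$, \emph{not} $s+1$: the statement $z_1'\cdots z_{s+1}'Qm=0$ merely says $Qm\in M_{s+1}$, which is already known (indeed $QM_j\subset M_j$ for every $j$, so $Qm\in M_{s+1}$ --- not $M_{s+2}$ as you write) and contains no information about the scalar. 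The paper computes $x_{i_1}'\cdots x_{i_s}'Qm$, finds the $x_{ij}$-contributions vanish because they create $s+1$ primed factors acting on $m\in M_{s+1}$, and finds the $\delta_{ij}e$-contributions give $x_{i_1}'\cdots\widehat{x_{i_l}'}\cdots x_{i_s}'\,e\,x_{i_l}'m = x_{i_1}'\cdots x_{i_s}'m$ after writing $e=e'+1$; summing over the $s$ positions $l=1,\dots,s$ gives exactly $s\cdot x_{i_1}'\cdots x_{i_s}'m$. So the integer $s$ arises as the number of slots in a length-$s$ product, not as ``$s+1$ slots minus a correction.''

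Your inductive variant in the second paragraph is sound, but you should actually carry it out. Writing $z=\sum_j c_jx_j$, one computes $[z,Q]=\sum_i[z,[f,x_i]]x_i'$ (the other commutator vanishes since $\mathfrak{n}$ is abelian), and decomposing $[z,[f,x_i]]=[z,[f,x_i]]'+\chi([z,[f,x_i]])$ with $\chi([z,[f,x_i]])=q(x_i,z)=c_i$ gives $[z,Q]m = z'm + (\text{two primed factors on }m)\in z'm + M_{s-1}$. Then $z'(Q-s)m=(Q-s)z'm+[z,Q]m\equiv(Q-(s-1))z'm \pmod{M_{s-1}}$, which lies in $M_{s-1}$ by the inductive hypothesis applied to $z'm\in M_s$. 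This is a genuine, slightly different route from the paper's direct length-$s$ expansion; it buys you a one-step commutator instead of the $s$-fold bookkeeping, at the cost of the extra induction. Either way, the filtration indices must be corrected as above for the argument to go through.
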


\begin{proof}
We first note that $QM_{j}\subset M_{j}$ for all $j\geq0.$ This follows since
$[f,x_{i}]\in\mathfrak{m}$ and $(x_{i}-\chi(x_{i}))M_{j}\subset M_{j-1}$. Let
$1\leq i_{1},...,i_{s}\leq d$. If $m\in M_{s+1}$ we consider
\[
x_{i_{1}}^{^{\prime}}x_{i_{2}}^{^{\prime}}\cdots x_{i_{s}}^{^{\prime}}%
Qm=\sum_{j=1}^{d}x_{i_{1}}^{^{\prime}}x_{i_{2}}^{^{\prime}}\cdots x_{i_{s}%
}^{^{\prime}}[f,x_{j}]x_{j}^{\prime}m=
\]%
\[
\sum_{j=1}^{d}\sum_{l=1}^{s}x_{i_{1}}^{^{\prime}}x_{i_{2}}^{^{\prime}}\cdots
x_{i_{l-1}}^{^{\prime}}[x_{i_{l}},[f,x_{j}]]x_{i_{l+1}}^{\prime}\cdots
x_{i_{s}}^{\prime}x_{j}^{\prime}m+Qx_{i_{1}}^{^{\prime}}x_{i_{2}}^{^{\prime}%
}\cdots x_{i_{s}}^{^{\prime}}m.
\]
The last term above is $0$. We look at the sum. The lemma above implies that
$[x_{i_{l}},[f,x_{j}]]=\delta_{i_{l},j}e+x_{i_{l}j}$ with $\chi(x_{i_{l}%
j})=0.$ This implies that the sum is equal to%
\[
\sum_{j=1}^{d}\sum_{l=1}^{s}x_{i_{1}}^{^{\prime}}x_{i_{2}}^{^{\prime}}\cdots
x_{i_{l-1}}^{^{\prime}}\delta_{i_{l}j}ex_{i_{l+1}}^{\prime}\cdots x_{i_{s}%
}^{\prime}x_{j}^{\prime}m+\sum_{j=1}^{d}\sum_{l=1}^{s}x_{i_{1}}^{^{\prime}%
}x_{i_{2}}^{^{\prime}}\cdots x_{i_{l-1}}^{^{\prime}}x_{i_{l}j}x_{i_{l+1}%
}^{\prime}\cdots x_{i_{s}}^{\prime}x_{j}^{\prime}m.
\]
The latter sum is $0$ by the definition of $M_{s+1}$. Also since $e^{\prime
}=e-1$ we have the expression%
\[
\sum_{l=1}^{s}x_{i_{1}}^{^{\prime}}x_{i_{2}}^{^{\prime}}\cdots x_{i_{l-1}%
}^{^{\prime}}x_{i_{l+1}}^{\prime}\cdots x_{i_{s}}^{\prime}x_{i_{l}}^{\prime
}m=sx_{i_{1}}^{^{\prime}}x_{i_{2}}^{^{\prime}}\cdots x_{i_{s}}^{^{\prime}}m
\]
since $\mathfrak{n}$ is abelian. Thus $x_{i_{1}}^{^{\prime}}x_{i_{2}%
}^{^{\prime}}\cdots x_{i_{s}}^{^{\prime}}(Q-s)m=0$.
\end{proof}

If $M$ is a $\mathfrak{g}$-module we set $M_{1}=\{m\in M|(x-\chi(x))m=0\}.$ We
think of this as a functor from the category of $\mathfrak{g}$-modules to the
category of vector spaces. The next result is a special case of [W1] and is
original due to [L].

\begin{proposition}
The functor $M\rightarrow M_{1}$ is exact when restricted to $\mathcal{M}%
_{\chi}$.
\end{proposition}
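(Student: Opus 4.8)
The plan is to establish exactness of $M \mapsto M_1$ on $\mathcal{M}_\chi$ by exhibiting $M_1$ as (isomorphic to) the kernel of a single operator acting on the whole module $M$, or better, as a direct summand obtained via the element $Q$. The key observation is Proposition preceding this one: on $M_{s+1}$ the operator $Q - s$ lands in $M_s$, so $Q$ acts on the finite filtration $0 = M_0 \subset M_1 \subset M_2 \subset \cdots$ in such a way that on the associated graded piece $M_{j+1}/M_j$ it acts by the scalar $j$. Since the filtration is exhaustive and these scalars $0, 1, 2, \ldots$ are pairwise distinct, the operator $Q$ on each $M_j$ (which preserves $M_j$, as noted at the start of that proof) is \emph{locally finite} with eigenvalues in $\{0, 1, \ldots, j-1\}$, and hence $M$ itself decomposes as a direct sum of generalized $Q$-eigenspaces $M = \bigoplus_{k \geq 0} M^{(k)}$ where $M^{(k)}$ is the generalized $k$-eigenspace. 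A diagram chase then shows $M^{(0)} \subseteq M_1$: indeed the generalized $0$-eigenspace intersected with $M_{s+1}$ maps into $M_s$ under $Q$... and more precisely, $M_1$ is exactly the honest (not just generalized) $0$-eigenspace of $Q$, because on $M_1$ itself $Q$ acts by $0$ (take $s = 0$ in the proposition: $Qm \in M_0 = \{0\}$ for $m \in M_1$), while any vector in $M_1^{\perp}$-sense has a nonzero $Q$-component. Let me make this precise in the write-up.

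First I would record that $Q$ preserves each $M_j$ and, by the Proposition, acts on $M_j$ as a locally finite operator whose only eigenvalue on $\mathrm{gr}$ is bounded by $j - 1$; since $M = \varinjlim M_j$, the operator $Q$ is locally finite on all of $M$ with spectrum contained in $\mathbb{Z}_{\geq 0}$. Therefore $M = \bigoplus_{k \geq 0} M^{(k)}$ where $M^{(k)} = \{m : (Q-k)^N m = 0 \text{ for } N \gg 0\}$, and this decomposition is functorial in $M$ (any $\mathfrak{g}$-map commutes with $Q$). Next I would identify $M_1$ with $M^{(0)}$: the inclusion $M_1 \subseteq M^{(0)}$ is immediate since $Q$ kills $M_1$; conversely, if $m \in M^{(0)}$, write $m \in M_{s+1} \setminus M_s$ for minimal $s$; if $s \geq 1$ then the Proposition gives $(Q - s)m \in M_s$, and iterating, $\prod_{i}(Q - s_i)m$ runs through strictly smaller filtration steps, forcing a polynomial in $Q$ with a nonzero root at $s \neq 0$ to annihilate the image of $m$ in $M_{s+1}/M_s$, contradicting that $m$ is generalized-$0$. (I should double-check the exact form of this induction — it is the one routine computation.) Hence $M_1 = M^{(0)}$.

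With $M_1 = M^{(0)}$ in hand, exactness is automatic: given a short exact sequence $0 \to A \to B \to C \to 0$ in $\mathcal{M}_\chi$, it splits compatibly with the $Q$-generalized-eigenspace decompositions, so $0 \to A^{(0)} \to B^{(0)} \to C^{(0)} \to 0$ is exact, i.e. $0 \to A_1 \to B_1 \to C_1 \to 0$ is exact. The only point needing care is right-exactness, i.e. surjectivity of $B_1 \to C_1$: given $\bar c \in C_1 = C^{(0)}$, lift it to any $b \in B$, then project $b$ onto $B^{(0)}$ using the locally finite $Q$-action (a suitable polynomial in $Q$ applied to $b$); the image of this projection in $C$ is the corresponding projection of $\bar c$, which is $\bar c$ itself since $\bar c \in C^{(0)}$.

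The main obstacle I anticipate is not conceptual but bookkeeping: verifying cleanly that $Q$ is genuinely \emph{locally finite} on $M$ rather than merely filtration-compatible — one must confirm that the minimal polynomial of $Q$ on each finitely-generated (or each $M_j$) piece actually exists, which follows because $M_j/M_{j-1}$ need not be finite-dimensional but $Q - (j-1)$ maps it into $M_{j-1}/M_{j-1} \cdot(\ldots)$, giving a finite-length filtration on which $Q$ has distinct scalars, hence $\prod_{k=0}^{j-1}(Q-k)$ annihilates $M_j$. That product identity is the crux, and it is exactly what the preceding Proposition delivers by downward induction on the filtration degree; once it is stated, the eigenspace decomposition and the exactness both fall out formally.
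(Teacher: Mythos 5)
Your proposal is correct and rests on the same mechanism as the paper's own proof: the preceding Proposition gives that $\prod_{k=0}^{j-1}(Q-k)$ annihilates $M_{j}$, and the surjectivity of $B_{1}\rightarrow C_{1}$ is obtained by applying the normalized polynomial $\frac{(-1)^{s}}{s!}\prod_{j=1}^{s}(Q-j)$ to an arbitrary lift $z$ of $v\in C_{1}$ — exactly the element the paper takes as $w$, using $Qv=0$ to see that $p w=v$. The paper does this directly, omitting your intermediate spectral packaging (local finiteness of $Q$, the decomposition $M=\bigoplus_{k\geq 0}M^{(k)}$, and the identification $M_{1}=M^{(0)}$), which is correct but not needed for the conclusion.
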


\begin{proof}
Let
\[%
\begin{array}
[c]{ccccccccc}
&  &  &  &  & p &  &  & \\
0 & \rightarrow & A & \rightarrow & B & \rightarrow & C & \rightarrow & 0
\end{array}
\]
be an exact sequence in $\mathcal{M}_{\chi}$. Then we must show that if $v\in
C_{1}$ then there exists $w\in B_{1}$ such that $pw=v$. Let $z$ be any element
of $B$ such that $pz=v.$ Then if $z\in B_{1}$ we are done. Otherwise $z$ is in
$B_{s+1}$ for some $s\geq1$. The previous result implies that%
\[
\prod_{j=1}^{s}(Q-j)z\in B_{1}\text{.}%
\]
Also%
\[
p\prod_{j=1}^{s}(Q-j)z=\prod_{j=1}^{s}(Q-j)pz=(-1)^{s}s!v
\]
since $pz=v$ and $Qv=0$. Thus we may take $w=\frac{(-1)^{s}}{s!}\prod
_{j=1}^{s}(Q-j)z$.
\end{proof}

Let $M\in\mathcal{M}_{\chi}$ and let $F$ be a finite dimensional $\mathfrak{g}%
$-module. Let $F^{j}=\{f\in F|hf=jf\}$. If $x\in\mathfrak{n}$ then
$xF^{j}\subset F^{j+2}$. Let $r$ be the maximal value of $j$ such that
$F^{j}\neq0$. Thus the minimal value is $-r$. Then we have

\begin{proposition}
We use the notation above. $M_{1}\otimes F$ is contained in $(M\otimes
F)_{r+1}$. Furthermore,%
\[
\prod_{j=1}^{r}(Q-j):M_{1}\otimes F\rightarrow(M\otimes F)_{1}%
\]
bijectively.
\end{proposition}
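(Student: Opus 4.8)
The plan is to reduce everything to the action of $Q$ on the tensor product, using the already-established Proposition that $(Q-s)$ lowers the filtration degree. First I would record how $Q$ acts on a tensor product. Since $Q = \sum_i [f,x_i]x_i'$ and both $[f,x_i]\in\mathfrak{m}$ and $x_i-\chi(x_i)$ act on $M\otimes F$ by the coproduct, one computes $Q(m\otimes v) = (Qm)\otimes v + \sum_i [f,x_i]m\otimes x_i v + \sum_i m\otimes [f,x_i]x_i' v + \text{cross terms}$; the key point is to bookkeep which terms vanish when $m\in M_1$. Since $x_i' m = 0$ for all $i$ when $m\in M_1$, the surviving contribution of $Q$ on $M_1\otimes F$ is $\sum_i [f,x_i]m\otimes x_i v$ together with $m\otimes(\text{something acting on }F)$. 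The cleanest route is to observe that on $F$ (a finite-dimensional module) the operator $\widehat Q := \sum_i [f,x_i]x_i$ (no $\chi$-shift, since $\chi$ is relevant only through $M$) acts, and on a weight vector $v\in F^j$ the element $e\in\mathfrak n$ together with the TDS relations force $\sum_i [f,x_i]x_i$ to act as multiplication by $j$ up to a nilpotent correction raising the $h$-weight — this is exactly the mechanism behind Lemma (the one giving $[x_i,[f,x_j]]=\delta_{ij}e+x_{ij}$) applied inside $F$.

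Next I would prove the containment $M_1\otimes F\subset (M\otimes F)_{r+1}$. For $m\in M_1$ and $v\in F^j$, I claim $x_{i_1}'\cdots x_{i_{j+r+1}}'(m\otimes v)=0$. Expanding each $x'$ by the coproduct, every term either has some $x'$ hitting $m$ (killing it, since $m\in M_1$) or routes all the $\mathfrak n$-factors onto $v\in F$; but $\mathfrak n$ raises $h$-weight by $2$ on $F$, and after more than $r - (\text{lowest weight})/2 $ applications the image in $F$ is forced into weight $>r$, hence zero. Counting carefully, $r+1$ factors already suffice because the worst case is $v\in F^{-r}$ needing $r$ raises to exit, so $r+1$ factors annihilate everything; the general $j$ only helps. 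This gives $M_1\otimes F\subset (M\otimes F)_{r+1}$.

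The heart of the argument is the bijectivity of $\prod_{j=1}^{r}(Q-j)\colon M_1\otimes F\to (M\otimes F)_1$. That the image lands in $(M\otimes F)_1$ is immediate from the containment just proved together with the Proposition: applying $(Q-r), (Q-(r-1)),\dots,(Q-1)$ successively drops the filtration index from $r+1$ down to $1$. For injectivity and surjectivity I would analyze $Q$ on the finite-dimensional space $M_1\otimes F$ itself (it preserves this space: $Q$ preserves each $(M\otimes F)_s$, and one checks directly it preserves $M_1\otimes F$ using $x_i'm=0$). On $M_1\otimes F$, writing $F=\bigoplus_j F^j$, the operator $Q$ acts as $j\cdot\mathrm{id}$ on $M_1\otimes F^j$ plus a strictly block-upper-triangular nilpotent part that raises $j$ (coming from the $x_{ij}$ corrections and from $e'=e-1$). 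Hence the eigenvalues of $Q$ on $M_1\otimes F$ are exactly $\{-r,\dots,r\}$, and $\prod_{j=1}^r(Q-j)$ kills precisely the generalized eigenspaces for eigenvalues $1,\dots,r$, i.e. $\bigoplus_{j\ge 1}M_1\otimes F^j$ (after accounting for nilpotent mixing), mapping the complementary part $\bigoplus_{j\le 0}M_1\otimes F^j$ isomorphically onto $(M\otimes F)_1$. To finish I must match dimensions: $(M\otimes F)_1$ should have the same dimension as $\bigoplus_{j\le 0}M_1\otimes F^j$; this follows because $F^{j}\cong F^{-j}$ as $M\cap(\text{image of }h)$ need not hold in general, so instead I would argue directly that $(M\otimes F)_1 = \prod_{j=1}^r(Q-j)(M_1\otimes F)$ by using the earlier exactness/surjectivity technique (the same $\frac{(-1)^s}{s!}\prod(Q-j)$ trick) to write any element of $(M\otimes F)_1$ as such a product.

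The step I expect to be the main obstacle is establishing that $Q$ really does act on $M_1\otimes F$ with the advertised block-triangular form, and in particular computing the diagonal blocks correctly as multiplication by the $h$-weight $j$ on $F$. This requires carefully tracking the coproduct expansion of $\sum_i[f,x_i](x_i-\chi(x_i))$ on $m\otimes v$, discarding all terms where an $\mathfrak n$-factor lands on $m\in M_1$, and then recognizing $\sum_i[f,x_i]x_i$ acting on $F^j$ as $j$ via the TDS identity $\sum_i[f,x_i]x_i = \sum_i[x_i,[f,x_i]]+\sum_i[f,x_i]x_i$-type rearrangement combined with $\sum_i[x_i,[f,x_i]] = de + (\text{weight-zero correction})$ from the Lemma. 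Once that local computation is pinned down, everything else is bookkeeping with the filtration and a dimension count.
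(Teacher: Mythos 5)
There is a genuine error at the heart of your bijectivity argument. You claim that $Q$ preserves $M_{1}\otimes F$ and acts on $M_{1}\otimes F^{j}$ as $j\cdot\mathrm{id}$ plus a weight-raising nilpotent part, and you then read off eigenvalues $\{-r,\dots,r\}$ for $Q$ on $M_{1}\otimes F$. Both claims are false. For $m\in M_{1}$ and $v\in F^{j}$ one has, since $x_{i}'m=0$,
\[
Q(m\otimes v)=\sum_{i}\bigl([f,x_{i}]m\otimes x_{i}v+m\otimes [f,x_{i}]x_{i}v\bigr),
\]
and since $[f,x_{i}]\in\mathfrak{m}$ preserves the $h$-grading of $F$ while $x_{i}$ raises it by $2$, every term lies in $M\otimes F_{\geq j+2}$: there is no diagonal term at all, and the first summand involves $[f,x_{i}]m\in M_{2}$, so $Q$ does not even map $M_{1}\otimes F$ into itself. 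The correct picture is that $Q$ strictly raises the $F$-weight on $M_{1}\otimes F$, so $\prod_{j=1}^{r}(Q-j)$ equals the nonzero scalar $(-1)^{r}r!$ times the identity modulo terms of strictly higher $F$-weight; injectivity then follows by looking at the lowest-weight component. Your spectral claim would instead make $\prod_{j=1}^{r}(Q-j)$ annihilate the generalized eigenspaces for $1,\dots,r$ inside $M_{1}\otimes F$, which flatly contradicts the injectivity you are trying to prove, and your attempted repair by a dimension count (which you yourself note fails) does not close this. Note also that $M_{1}$ need not be finite dimensional, so an eigenvalue analysis of $Q$ on $M_{1}\otimes F$ is not available in the generality needed.

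Your surjectivity sketch is also missing the key step. Invoking ``the same $\frac{(-1)^{s}}{s!}\prod(Q-j)$ trick'' only produces, from $u\in(M\otimes F)_{1}$, the element $u$ itself; what one must show is that $u$ has a preimage in $M_{1}\otimes F$. The paper does this by writing $u=\sum_{i,j\geq j_{o}}m_{ji}\otimes f_{ji}$ over a weight basis of $F$, observing that $z'u=0$ forces the lowest-weight coefficients $m_{j_{o}i}$ to lie in $M_{1}$, subtracting $\frac{(-1)^{r}}{r!}\Gamma\bigl(\sum_{i}m_{j_{o}i}\otimes f_{j_{o}i}\bigr)$, and iterating: the difference stays in $(M\otimes F)_{1}$ with strictly higher lowest weight, so the process terminates. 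Your first assertion (that $M_{1}\otimes F\subset(M\otimes F)_{r+1}$) and the observation that the image of $\prod_{j=1}^{r}(Q-j)$ lands in $(M\otimes F)_{1}$ via the earlier proposition are essentially correct and agree with the paper, but the injectivity and surjectivity arguments as proposed do not work and need to be replaced by the weight-filtration argument just described.
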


\begin{proof}
If $z\in\mathfrak{n}$ and $m\in M_{1},f\in F^{j}$ then%
\[
z^{\prime}(m\otimes f)=z^{\prime}m\otimes f+m\otimes zf=m\otimes zf.
\]
This implies that
\[
z_{1}^{\prime}\cdots z_{r+1}^{\prime}(m\otimes f)=m\otimes z_{1}^{\prime
}\cdots z_{r+1}^{\prime}f
\]
and since $z_{1}^{\prime}\cdots z_{r+1}^{\prime}f\in F^{j+2r+2}$ and $j\geq-r$
the first assertion follows.

To prove the second assertion we first note that if $m\in M_{1}$ and $f\in
F^{j}$ then%
\begin{equation}
Q(m\otimes f)=\sum[f,x_{i}](m\otimes x_{i}f).\label{eq:qu}%
\end{equation}
Thus if we set $F_{\geq j}=\oplus_{l\geq j}F^{l}$. Then
\[
Q(m\otimes F_{\geq j})\subset M\otimes F_{\geq j+2}.
\]
Set $\Gamma=$ $\prod_{j=1}^{r}(Q-j)$. Let $u=\sum m_{ji}\otimes f_{ji}$ with
$m_{ji}\in M_{1}$ and $f_{ji}\in F^{j}$ forming a basis of $F^{j}$. Let
$j_{o}$ be the minimum of the $j$ such that there exists an $m_{ji}\neq0 $.
Suppose that $\Gamma u=0$. Then (\ref{eq:qu}) implies that
\[
0=(-1)^{r}r!\sum_{i}m_{j_{o}i}\otimes f_{j_{o}i}+\sum_{i,j>j_{o}}w_{ji}\otimes
f_{ji}%
\]
for appropriate $w_{ji}$. This is a contradiction. So $\Gamma$ is injective. We
note that by the first part and Lemma 3, $\Gamma(M_{1}\otimes F)\subset
(M\otimes F)_{1}$. To complete the proof we need only prove the surjectivity.
So let $u\in(M\otimes F)_{1}$. Write $u$ as $\sum_{i,j\geq j_{o}}m_{ji}\otimes
f_{ji}$ as above. Then (*) above implies that $m_{j_{o}i}\in M_{1}$. Thus if
$v_{j_{o}}=\sum_{i}m_{j_{o}i}\otimes f_{j_{o}i} $ then (as above)
\[
\frac{(-1)^{r}}{r!}\Gamma v_{j_{o}}=v_{j_{o}}+\sum_{i,j>j_{o}}w_{ji}\otimes
f_{ji}.
\]
Hence $z=u-\frac{(-1)^{r}}{r!}\Gamma v_{j_{o}}\in(M\otimes F)_{1}$ and
\[
z=\sum_{i,j\geq j_{1}}n_{ji}\otimes f_{ji}%
\]
with $j_{1}>j_{o}$. We can thus prove the surjectivity by the obvious
contradiction or by the obvious iteration to calculate a preimage.
\end{proof}

\section{Some Bruhat theory.} \label{sec:bruhat}

Let $G$ be a real reductive group of inner type (as in [W3]) with compact
center and maximal compact subgroup $K.$ Let $P_{o}$ be a minimal parabolic
subgroup of $G$ and let $P\supset P_{o}$ be a parabolic subgroup. We endow the
two groups with compatible Langlands decompositions $P_{o}=M_{o}A_{o}N_{o}$
and $P=MAN$. That is, $M_{o}A_{o}$ (resp. $MA)$ is a Levi factor of $P_{o}$
(resp. $P)$, $A_{o}$ is the identity component of a maximal split torus over $%
\mathbb{R}
$, $A$ is the identity component of a maximal split torus in the center of
$P\cap\theta P$ ($\theta$ the Cartan involution corresponding to $K$) and
$N_{o}$ (resp. $N$) is the nilradical of $P_{o}$ (resp. $P$). We have
$N\subset N_{o},A\subset A_{o},M_{o}\subset M.$ Let $W$ be the Weyl group of
$G$ with respect to $A_{o}$ and let $W_{M}$ be the Weyl group of $MA$ with
respect to $A_{o}$. Let $\Phi$ (resp. $\Phi_{M}$) denote the root system of
$G$ (resp. $MA)$ relative to $A_{o}.$ Let $\Phi^{+}$ denote the system of
positive roots in $\Phi$ that are roots of $A_{o}$ on $N_{o}$. Set $\Phi
_{M}^{+}=\Phi^{+}\cap\Phi_{M}$.

Let $W^{M}$ denote the set of all $w\in W$ such that $w\Phi_{M}^{+} \subset \Phi^{+}$. Then it is clear that $W=W^{M}W_{M}$ with unique decomposition. If
$w\in W$ we choose $w^{\ast}\in N_{K}(A_{o})$ such that $M_{o}w^{\ast}=w$
(here $N_{K}(A_{o})$ is the normalizer of $A_{o}$ in $K$). The Bruhat Lemma
says that
\[
G=\bigcup_{w\in W^{M}}P_{o}w^{\ast}P\text{.}%
\]

\begin{lemma}\label{lemma:bruhat}
With the notation as above and $K_{M}=K\cap M$ we have%
\[
G=\bigcup_{w\in W^{M}}P_{o}w^{\ast}K_{M}N.
\]
Furthermore, let $w_{G}$ denote the longest element of $W$ and $w_{M}$ the
longest element of $W_{M}$ then if $w\in W^{M}$ and $w\neq w^{M}=w_{G}w_{M}$
then
\[
\dim P_{o}w^{\ast}K_{M}N<\dim P_{o}(w_{G}w_{M})^{\ast}K_{M}N.
\]

\end{lemma}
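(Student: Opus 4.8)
The plan is to establish the two assertions of Lemma~\ref{lemma:bruhat} by refining the standard Bruhat decomposition $G=\bigcup_{w\in W^{M}}P_{o}w^{\ast}P$ recorded above. For the first assertion, I would start from this decomposition and then analyze the factor $P$ on the right. Writing $P=MAN$ and using the Iwasawa-type decomposition $M=K_{M}A_{M}N_{M}$ relative to the minimal parabolic $P_{o}\cap M$ of $M$ (here $A_M, N_M$ are the pieces of $M_o A_o N_o$ inside $M$), one gets $P = K_M A_o N_o$ since $A A_M = A_o$ (both $A$ and $A_M$ sitting inside $A_o$) and $N_M N = N_o$. Since $A_o N_o \subset P_o$, one may absorb the $A_o N_o$ part into the left factor $P_o$, leaving $G = \bigcup_{w\in W^M} P_o w^\ast K_M$. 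To recover the stated form with an $N$ on the right, I would instead only absorb the $A_M N_M$ portion and keep $N$ separate; the point is simply that $P = K_M A_M N_M N$ with $A_M N_M \subset M_o A_o N_o \subset P_o$ and $N$ the nilradical of $P$, so $P_o w^\ast P = P_o w^\ast K_M N$ once one checks that $w^\ast$ normalizes nothing problematic — actually here it is cleanest to note $A_M N_M \subset P_o$ directly and rewrite $P_o w^\ast P = P_o w^\ast (A_M N_M)(K_M)(N)$, then commute $A_M N_M$ across $w^\ast$: $w^\ast A_M N_M (w^\ast)^{-1}$ lies in some $M_o A_o$-conjugate which is still inside the group generated by $P_o$ and its opposite... this is the subtle point, so more care is needed (see below).

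For the dimension assertion, the strategy is a dimension count of the double coset $P_o w^\ast K_M N$. The dimension of $P_o w^\ast P$ for $w\in W^M$ is governed by the length function: $\dim P_o w^\ast P = \dim P_o + \dim P - \dim(P_o \cap w^\ast P (w^\ast)^{-1})$, and the generic (open) coset is the one maximizing this, which is the big cell, corresponding to $w = w^M = w_G w_M$ (the unique element of $W^M$ of maximal length, sending $\Phi^+_M$ into $\Phi^+$ while sending the rest of $\Phi^+$ to negatives as much as possible). For the refined cosets $P_o w^\ast K_M N$, the computation is essentially the same because $K_M$ has the same dimension as $M$ modulo the parts already accounted for: passing from $P_o w^\ast P$ to $P_o w^\ast K_M N$ only removes the $A_M N_M$ factor which was already absorbed into $P_o$ (for generic $w$ it does not reduce the dimension of the product), so the strict inequality $\dim P_o w^\ast K_M N < \dim P_o (w_G w_M)^\ast K_M N$ for $w \neq w^M$ follows from the corresponding strict inequality $\dim P_o w^\ast P < \dim P_o (w_G w_M)^\ast P$, which is the standard statement that the big Bruhat cell is the unique open one. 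I would make this precise by computing the tangent space of $P_o w^\ast K_M N$ at $w^\ast$: it is $\mathrm{Ad}(w^\ast)^{-1}(\mathfrak{p}_o) + (\mathfrak{k}_M + \mathfrak{n})$, and its dimension is $\dim \mathfrak{g} - \dim\bigl(\mathrm{Ad}(w^\ast)^{-1}\mathfrak{n}_o^{opp} \cap (\mathfrak{k}_M + \mathfrak{n})^{\perp\text{-ish complement}}\bigr)$; tracking which roots survive gives the count, maximized exactly at $w^M$.

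The main obstacle I anticipate is the bookkeeping in the first assertion: carefully justifying that one can replace the full parabolic $P$ on the right by $K_M N$ after absorbing the leftover $A$-part and $N_o$-part into $P_o w^\ast$, which requires commuting a subgroup of $A_o N_o$ past $w^\ast$ and checking the result still lies in $P_o$ (using that $w\in W^M$, so $w$ preserves positivity of the roots in $\Phi_M^+$ and one has control of where the other roots go). Concretely, I would decompose $N_o = (N_o \cap M) N = N_M N$ and note $N_M \subset M_o \subset P_o$ after conjugation — but since $w$ need not normalize $M_o$, one must instead argue coset-by-coset: $w^\ast K_M A_M N_M N = w^\ast K_M A_M N_M (w^\ast)^{-1} w^\ast N$, and $w^\ast(K_M A_M N_M)(w^\ast)^{-1}$ is contained in a parabolic conjugate whose "upper-triangular" part relative to $P_o$ is what gets absorbed, with the genuinely lower-triangular part being trivial precisely because $w\in W^M$ sends $\Phi_M^+$ to $\Phi^+$. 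Once this is pinned down, the dimension statement is a routine root count. I would also remark that the exactness/vanishing machinery of Section~\ref{sec:tensoring} is not needed here — this lemma is pure Bruhat geometry — and that the uniqueness of the open coset is exactly what makes the later holomorphic continuation argument go through.
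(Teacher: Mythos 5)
Your overall strategy — refine the Bruhat decomposition via an Iwasawa decomposition of $M$, then absorb the parabolic part of $M$ into $P_o$ by conjugating past $w^*$ using $w\in W^M$ — is exactly the paper's, but your execution has a genuine flaw in factor ordering that blocks the first assertion. You write $M=K_M A_M N_M$ with $K_M$ on the \emph{left}, so in $P_o w^* P = P_o\, w^*\, K_M (A_M N_M) A N$ the split-solvable part is separated from $P_o$ by both $w^*$ and $K_M$; since $K_M$ does not normalize $A_M N_M$, the ``absorption'' as you describe it does not go through, and your intermediate claim $G=\bigcup P_o w^* K_M$ is simply false (an $N$ must remain on the right). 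Your attempted repair conjugates the \emph{whole} product $K_M A_M N_M$ by $w^*$, but $w^* K_M (w^*)^{-1}$ is not contained in $P_o$ nor in any parabolic in a useful way; and the incidental statement ``$N_M\subset M_o$'' is also wrong, since $N_M$ is unipotent while $M_o$ is compact.

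The fix is the paper's short argument: use the Iwasawa decomposition of $M$ in the opposite order, $M=(P_o\cap M)K_M$, so that
\[
P_o\, w^*\, P \;=\; P_o\, w^*(P_o\cap M)\,K_M N \;=\; P_o\,\bigl[w^*(P_o\cap M)(w^*)^{-1}\bigr]\, w^*\, K_M N,
\]
and the bracketed group lies in $P_o$ precisely because $w\in W^M$: $w^*$ normalizes $A_o$ (hence $M_o A_o$), and $w\Phi_M^+\subset\Phi^+$ sends the relevant root groups into $N_o$. You do state this positivity criterion at the end of your paragraph, but you never apply it to the correct factor — it should act on $P_o\cap M$ alone, not on $K_M$ and not on all of $A_o N_o$. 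Your dimension argument, by contrast, is fine once the set-theoretic identity $P_o w^* K_M N = P_o w^* P$ is in hand, since it reduces to the standard fact that $P_o (w^M)^* P$ is the unique open $(P_o,P)$-double coset; the paper phrases this more sharply — $w^M$ is the unique $w\in W^M$ with $w^* N (w^*)^{-1}\cap N_o=\{1\}$ — because that sharper form is what gets reused in the vanishing argument of Proposition~\ref{prop:upperbound}.
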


\begin{proof}
We note that $W^{M}=\{w\in W|w\Phi_{M}^{+}\subset\Phi^{+}\}$ hence
$w^{\ast}(P_{o}\cap M)(w^{\ast})^{-1}\subset P_{o}$. The Iwasawa decomposition
implies that%
\[
M=(P_{o}\cap M)K_{M}.
\]
Since $w^{\ast}A_{o}(w^{\ast})^{-1}\subset A_{o}$ for all $w\in W$ we see that%
\[
G=\bigcup_{w\in W^{M}} P_{o} w^{\ast} (P_{o}\cap M)K_{M}N =\bigcup_{w\in W^{M}%
}P_{o}w^{\ast}(P_{o}\cap M)(w^{\ast})^{-1}w^{\ast}K_{M}N%
\]%
\[
=\bigcup_{w\in W^{M}}P_{o}w^{\ast}K_{M}N.
\]
The dimension assertion follows from the fact that $w=w_{G}w_{M}$ is the
unique element of $W^{M}$ such that $w^{\ast}N(w^{\ast})^{-1}\cap N_{o}=\{1\}$.
\end{proof}

We now return to the notation in section \ref{sec:examples}. We choose a minimal parabolic
subalgebra in $P$, $P_{o}$, and $K,e,f,h$ as in that section. Then
$Lie(K_{M})=\mathfrak{m}^{f}$. Let $(...,...)$ denote a positive multiple of
the Killing form. Let $\chi(\exp(x))=e^{itB(f,x)}$ with $t\neq0$ for $x\in\mathfrak{n}$. We fix
an admissible, finitely generated, smooth Fr\'{e}chet representation of $M$ of
moderate growth, $(\sigma,V_{\sigma})$. Let $\nu\in\mathfrak{a}_{%
\mathbb{C}
}^{\ast}$ ($\mathfrak{a}=Lie(A)$). We set
\[
I_{P,\sigma,\nu}^{\infty}=\left\lbrace \phi:G\rightarrow V_{\sigma}\, \left| \, \begin{array}{c} \mbox{$\phi$ is $C^{\infty}$, $\phi(namg)=a^{\nu+\rho
}\sigma(m)\phi(g)$}, \\ \mbox{$n\in N$, $a\in A$, $m\in M$, $g\in G$}\end{array}\right.\right\rbrace
\]
and $\pi_{P,\sigma,\nu}(g)\phi(x)=\phi(xg)$. If we endow $I_{P,\sigma,\nu
}^{\infty}$ with the usual $C^{\infty}$ topology then $\pi_{P,\sigma,\nu}$ is
an admissible, finitely generated smooth Fr\'{e}chet representation of
moderate growth. We set $U_{P,\sigma,\nu}=\{\phi\in I_{P,\sigma,\nu}^{\infty
}|$supp$\phi\subset P(w^{M})^{\ast}N\}$. Let $Wh_{\chi}(I_{P,\sigma,\nu
}^{\infty})$ be the space of all continuous functionals $T\in
(I_{P,\sigma,\nu}^{\infty})^{\prime}$ (continuous dual) such that
$T(\pi_{p,\sigma,\nu}(n)^{-1}\phi)=\chi(n)T(\phi)$, this space is called the space of Bessel models for $I_{P,\sigma,\nu}^{\infty}$. We will now show how to define a $K_{M}$ intertwining operator
\[
 \Phi_{P,\sigma,\nu}:Wh_{\chi}(I^{\infty}_{P,\sigma,\nu}) \longrightarrow V_{\sigma}'.
\]
Let ${\D}'(P(w^M)^{\ast}N:V_{\sigma})$ be the space of $V_{\sigma}'$-valued distributions on $P(w^M)^{\ast}N$. Given $\lambda \in Wh_{\chi}(I^{\infty}_{P,\sigma,\nu})$, we will first define a distribution $\bar{\lambda}\in{\D}'(P(w^M)^{\ast}N:V_{\sigma})$ in the following way: given $f\in C_{c}^{\infty}(P(w^M)^{\ast}N)$ and $v \in V_{\sigma}$, extend $f$ to $G$ by $0$ in the complement of $P(w^M)^{\ast}N$, and define $\bar{f}_{v} \in I^{\infty}_{P,\sigma,\nu}$ by
\[
 \bar{f}_{v}(g)=\int_{P}\sigma_{\nu}(p)^{-1}\delta_{P}(p)^{-\frac{1}{2}}f(pg)v\, d_{r}p
\]
for a choice of a right invariant measure $d_{r}p$. Observe that $\bar{f}_{v}\in U_{P,\sigma,\nu}$. Now define
\[
 \bar{\lambda}(f)(v)=\lambda(\bar{f}_{v}).
\]
It's easy to check that $\bar{\lambda}(L_{p}f)=[\delta_{P}(p)^{\frac{1}{2}}\sigma_{\nu}(p)^{\ast}](\bar{\lambda}(f))$, and $\bar{\lambda}(R_{n}f)=\chi(n)^{-1}\bar{\lambda}(f)$, i.e., $\bar{\lambda}$ is actually in ${\D}'(P(w^M)^{\ast}N:V_{\sigma_{\nu-\rho}}\otimes \mathbb{C}_{\chi^{-1}})^{P\times N}$, the space of $(V_{\sigma_{\nu-\rho}}\otimes \mathbb{C}_{\chi^{-1}})'$-valued invariant distributions under the action of $P\times N$. Observe that $P \times N$ acts transitively on $P(w^M)^{\ast}N$ and hence by theorem 3.11 of [K-V] there exists an isomorphism
\[
 {\D}'(P(w^M)^{\ast}N:V_{\sigma_{\nu-\rho}}\otimes \mathbb{C}_{\chi^{-1}})^{P\times N} \longrightarrow (V_{\sigma_{\nu-\rho}}'\otimes \mathbb{C}_{\chi^{-1}})^{(P \times N)_{(w^M)^{\ast}}}\cong V_{\sigma}'.
\]
Let $\mu_{\lambda}$ be the image of $\bar{\lambda}$ under this isomorphism. Then using again theorem 3.11 of [K-V] we have the following explicit description of $\bar{\lambda}$ in terms of $\mu_{\lambda}$:
\begin{eqnarray}
\bar{\lambda}(f)(v) & = & \int_{N}\int_{P}\chi(n)^{-1}f(p(w^M)^{\ast}n)\mu_{\lambda}(\sigma_{\nu}(p)^{-1}\delta_{P}(p)^{-\frac{1}{2}} v)\, d_{r}p\, dn \nonumber \\
                     & = & \mu_{\lambda}\circ J_{P,\sigma,\nu}^{\chi}(\bar{f}_{v})=\lambda(\bar{f}_{v}). \label{eq:KV}
\end{eqnarray}
Where the last equality follows from the definition of $\bar{\lambda}$. Set $\Phi_{P,\sigma,\nu}(\lambda)=\mu_{\lambda}$. Observe that, since $\bar{f}_{v}\in U_{P_{\circ},\eta}$, the integral in the right hand side of equation (\ref{eq:KV}) is absolutely convergent, and hence $\Phi_{P,\sigma,\nu}$ is well defined for all $\nu \in \mathfrak{a}'$.  The purpose of this section is to show that $\Phi_{P,\sigma,\nu}$ is injective, in the next section we will show that it is surjective. The following lemma is an important first step in order to show the injectivity of $\Phi_{P,\sigma,\nu}$.

\begin{lemma}
If $w\in W^{M}$ is not $w^{M}$ then
\[
(f,Ad(w^{\ast})Lie(N_{o}))\neq\{0\}.
\]

\end{lemma}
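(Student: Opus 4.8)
The statement asserts that for $w \in W^M$ with $w \neq w^M = w_G w_M$, the Killing form pairing $(f, \mathrm{Ad}(w^*)\mathrm{Lie}(N_o))$ is nonzero. Since $f \in \mathfrak{g}_{-2}$ is the $-2$ eigenvector of $\mathrm{ad}(h)$, and $(f, \cdot)$ pairs nontrivially only with $\mathfrak{g}_2 = \mathfrak{n}$, the claim is equivalent to: $\mathrm{Ad}(w^*)\mathrm{Lie}(N_o)$ has nontrivial intersection with the $+2$ eigenspace $\mathfrak{n}$ of $\mathrm{ad}(h)$ — or more precisely, its projection onto $\mathfrak{n}$ along $\bar{\mathfrak{n}} \oplus \mathfrak{m}$ is nonzero and pairs against $f$. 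The plan is to translate everything into the root-space combinatorics set up in Section~\ref{sec:bruhat} and to use the characterization already recorded there: $w^M = w_G w_M$ is the \emph{unique} element of $W^M$ with $w^*N(w^*)^{-1} \cap N_o = \{1\}$, equivalently (reading the proof of Lemma~\ref{lemma:bruhat}) the unique $w \in W^M$ with $\mathrm{Ad}(w^*)\mathrm{Lie}(\bar N) \supset$ no root space of $N_o$, i.e.\ with $w^{-1}(\Phi^+ \setminus \Phi_M^+) \subset -\Phi^+$ on the relevant part.

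Concretely, here are the steps I would carry out. First, decompose $\mathfrak{n} = \mathrm{Lie}(N)$ into $A_o$-root spaces: $\mathfrak{n} = \bigoplus_{\alpha \in S} \mathfrak{g}_\alpha$ where $S = \Phi^+ \setminus \Phi_M^+$ is the set of roots of $A_o$ on $N$ (these are exactly the positive roots not in the Levi). Since $h$ lies in (a conjugate of) $\mathfrak{a}$ and $\mathfrak{n}$ is its $+2$-eigenspace, every $\alpha \in S$ satisfies $\alpha(h) = 2$, and $f$ lies in $\bigoplus_{\alpha \in S}\mathfrak{g}_{-\alpha}$ with nonzero component in \emph{every} $\mathfrak{g}_{-\alpha}$, $\alpha \in S$ — this is where I would invoke that $\chi(\exp x) = e^{itB(f,x)}$ has compact stabilizer in $M$, equivalently that $f$ (i.e.\ the differential of $\chi$) is a generic/non-degenerate element of $\bar{\mathfrak{n}}^* \cong \mathfrak{n}$, so its pairing with $\mathfrak{g}_\alpha$ is nonzero for each $\alpha \in S$. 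Second, observe $(f, \mathrm{Ad}(w^*)\mathrm{Lie}(N_o)) \neq 0$ iff $\mathrm{Ad}(w^*)\mathrm{Lie}(N_o)$ contains a vector with nonzero $\mathfrak{g}_\alpha$-component for some $\alpha \in S$, which (since $\mathrm{Ad}(w^*)$ permutes root spaces, $\mathrm{Ad}(w^*)\mathfrak{g}_\beta = \mathfrak{g}_{w\beta}$) happens iff $w\beta \in S$ for some $\beta \in \Phi^+$ (the roots of $N_o$), i.e.\ iff $w(\Phi^+) \cap S \neq \emptyset$. Third, I would show that $w(\Phi^+) \cap S = \emptyset$ forces $w = w^M$: if $w(\Phi^+)$ misses all of $S$, then since $w \in W^M$ we have $w(\Phi_M^+) \subset \Phi^+$, so $w$ maps all of $\Phi^+ = \Phi_M^+ \sqcup S$ into $\Phi^+ \setminus S = -(\text{something}) \cup \Phi_M^+$; combined with $w \in W^M$ this pins down $w$ on $S$ to send it into $-\Phi^+$, and then $w^{-1}$ agrees with $w_G w_M$ on the decomposition, giving $w = w_G w_M = w^M$ by uniqueness.

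The main obstacle I anticipate is Step~3: correctly extracting from "$w \in W^M$ and $w(\Phi^+)$ avoids $S$" the conclusion that $w$ must be the longest representative $w_G w_M$. The clean way is probably to argue by length/positivity: $w \in W^M$ means $w$ is the minimal-length representative of its coset $w W_M$ with $w\Phi_M^+ \subset \Phi^+$; the condition $w(\Phi^+) \cap S = \emptyset$ means $w$ sends every root of $N_o$ either into $\Phi_M^+$ or into $-\Phi^+$, i.e.\ $w^{-1}$ sends $S$ into $-\Phi^+$ and $\Phi^+_M$ to $\Phi^+_M$ — but that is exactly the defining property of $w_G w_M$ (the element that is "as negative as possible on the $N$-part while staying in $W^M$"), which the proof of Lemma~\ref{lemma:bruhat} singled out as the unique element of $W^M$ with $w^*N(w^*)^{-1}\cap N_o=\{1\}$. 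So I would phrase Step~3 as: the negation of the Lemma's conclusion gives $\mathrm{Ad}(w^*)\mathrm{Lie}(N)\cap\bar{\mathfrak n}_o$-type containment equivalent to $w^*N(w^*)^{-1}\subset N_o\cdot(\text{stuff})$, contradicting the cited uniqueness unless $w = w^M$. A secondary subtlety is bookkeeping between $N$ and $\bar N$ / between $f\in\bar{\mathfrak n}$ and the roots $S\subset$ roots on $\mathfrak n$, and making sure $h$ is genuinely conjugate into $\mathfrak a$ so that $A_o$-root-space language applies to $\mathfrak n$; these are routine given the setup in Section~\ref{sec:bruhat} where $P \supset P_o$ and $A \subset A_o$ are chosen compatibly.
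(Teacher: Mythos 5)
Your overall strategy—translate the nonvanishing of $(f,\mathrm{Ad}(w^*)\mathrm{Lie}(N_o))$ into a root-combinatorial condition, use the genericity of $f$, and then show the negation forces $w=w^M$—is the same as the paper's. But there are two points where the proposal does not match what is actually needed, and the step you yourself flag as the "main obstacle" is precisely the step the paper handles by a different and much more explicit device.

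First, the genericity claim is imprecise. You assert that $f$ has a nonzero component in \emph{every} $\mathfrak g_{-\alpha}$, $\alpha\in S$. For the standard normalization coming from the tube-type structure this is false: $f$ can be taken to be a sum of long negative root vectors only, i.e.\ nonzero exactly in $\mathfrak g_{-2\varepsilon_i}$, $i=1,\dots,n$ (think of $f=\left[\begin{smallmatrix}0&0\\ I&0\end{smallmatrix}\right]$ in the $Sp(n,\mathbb R)$ example: the matrix $I$ is diagonal, so the $\mathfrak g_{-(\varepsilon_i+\varepsilon_j)}$-components with $i\neq j$ vanish). What the argument actually requires, and what the paper uses, is only that $(f,\mathfrak g_{2\varepsilon_i})\neq 0$ for each $i$; from this the vanishing of $(f,\mathrm{Ad}(w^*)\mathrm{Lie}(N_o))$ gives $2\varepsilon_i\notin w\Phi^+$, i.e.\ $-2\varepsilon_i\in w\Phi^+$, for all $i$. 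Your condition "$w(\Phi^+)\cap S=\emptyset$" is therefore not the right reduction as you justify it (it happens to be equivalent by a separate root-system argument, but not for the reason you give).

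Second, and more importantly, Step~3 is left as a gap. The paper closes it by exploiting the explicit type-$C_n$ data that the tube-type hypothesis provides: with $\Phi^+=\{\varepsilon_i\pm\varepsilon_j\}\cup\{2\varepsilon_i\}$, $\Phi_M^+=\{\varepsilon_i-\varepsilon_j\}$, one has $\varepsilon_i(h)=1$ for all $i$. From $-2\varepsilon_i\in w\Phi^+$ for all $i$ one deduces $\varepsilon_i(w^{-1}h)<0$ for all $i$, hence $(\varepsilon_i+\varepsilon_j)(w^{-1}h)<0$ and so $-(\varepsilon_i+\varepsilon_j)\in w\Phi^+$ for all $i\leq j$; together with $w\Phi_M^+\subset\Phi^+$ this pins down $w\Phi^+=\Phi_M^+\cup(-S)=w^M\Phi^+$, so $w=w^M$. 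Your proposed route through the uniqueness of the element with $w^*N(w^*)^{-1}\cap N_o=\{1\}$ in Lemma~\ref{lemma:bruhat} could in principle be made to work, but as written it omits the necessary bookkeeping (e.g.\ why $w^{-1}S\subset -\Phi^+$ cannot meet $-\Phi_M^+$, and why $w\Phi_M^+$ is then forced into $\Phi_M^+$), whereas the paper's use of $h$ and $\varepsilon_i(h)=1$ gives a two-line closure. You should incorporate that trick rather than leave the combinatorial core as an anticipated obstacle.
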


\begin{proof}
We note that the tube type assumption in section \ref{sec:examples} also implies that $\Phi$ is
a root system of type $C_{n}$ with $n=\dim A_{o}$. This implies that there
exist linear functionals $\varepsilon_{1},...,\varepsilon_{n}$ on
$\mathfrak{a}_{o}=Lie(A_{o})$ such that
\[
\Phi^{+}=\{\varepsilon_{i}\pm\varepsilon_{j}|1\leq i<j\leq n\}\cup
\{2\varepsilon_{1},...,2\varepsilon_{n}\}
\]
and%
\[
\Phi_{M}^{+}=\{\varepsilon_{i}-\varepsilon_{j}|1\leq i<j\leq n\}.
\]
We note that this implies that $\varepsilon_{i}(h)=1$ for all $i$. If $w\in
W^{M}$ and $(f,Ad(w^{\ast})Lie(N_{o}))=\{0\}$ then we must have%
\[
-2\varepsilon_{i}\in w\Phi^{+},\qquad i=1,...,n.
\]
We therefore see that $\varepsilon_{i}(w^{-1}h)<0$ for all $i$. Hence
$-(\varepsilon_{i}+\varepsilon_{j})\in w\Phi^{+}$ for all $i\leq j$. This
implies that $w=w^{M}$.
\end{proof}

We will now use these results and some standard Bruhat theory to prove some
vanishing results. Observe that the next proposition immediately implies the injectivity of $\Phi_{P,\sigma,\nu}$.

\begin{proposition}\label{prop:upperbound}
If $\lambda \in Wh_{\chi}(I_{P,\sigma,\nu}^{\infty})$ and $\lambda_{|U_{P,\sigma,\nu}}=0$
then $\lambda=0$.
\end{proposition}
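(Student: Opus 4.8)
The plan is to reduce the statement to a dimension count on Bruhat cells combined with the standard theory of distributions supported on lower-dimensional subvarieties, exactly in the spirit of the $K$--$V$ argument used above to define $\Phi_{P,\sigma,\nu}$. First I would note that $U_{P,\sigma,\nu}$ consists precisely of those sections supported on the big cell $P(w^M)^{\ast}N$, and that $G \setminus P(w^M)^{\ast}N$ is a union of the smaller cells $P w^{\ast} K_M N$ for $w \in W^M$, $w \neq w^M$, by Lemma \ref{lemma:bruhat}. A functional $\lambda \in Wh_{\chi}(I_{P,\sigma,\nu}^{\infty})$ vanishing on $U_{P,\sigma,\nu}$ gives rise, by the same construction as in the definition of $\bar\lambda$ (restricting test functions to a neighborhood of a non-open cell rather than to the big cell), to a $V_\sigma'$-valued distribution supported on the closed set $Z = \bigcup_{w \neq w^M} P w^{\ast} K_M N$, equivariant under $P \times N$ with the character $\delta_P^{1/2}\sigma_\nu^{\ast} \otimes \chi^{-1}$. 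So it suffices to show that any such $(P\times N)$-equivariant distribution supported on $Z$ must be zero.

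The key step is to filter $Z$ by the (finitely many) orbit closures of $P\times N$ and apply the stratification argument for equivariant distributions: on each stratum $P w^{\ast} K_M N$, the transverse-derivative contributions along the normal bundle are governed by the action of $\mathfrak{n}$ via its "horizontal" directions, i.e. by $\mathrm{Ad}(w^{\ast})^{-1}\mathfrak{n}_o$ meeting $\mathfrak{n}$. Concretely, the obstruction to a nonzero $\chi^{-1}$-equivariant distribution supported on the stratum through $w^{\ast}$ is the requirement that $\chi$ restrict trivially to the relevant piece of $\overline{N}$-directions entering that stratum; this is exactly where Lemma 10 (the preceding lemma, $(f, \mathrm{Ad}(w^{\ast})Lie(N_o)) \neq \{0\}$ for $w \neq w^M$) is used. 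Since $\chi$ is built from $f$ via $\chi(\exp x) = e^{itB(f,x)}$ with $t \neq 0$, that lemma says $\chi$ is genuinely nontrivial on the directions normal to every small cell, which forces the local contribution of $\lambda$ along that cell to vanish. Running this over all cells $w \in W^M$, $w \neq w^M$, in order of increasing dimension kills $\lambda$ entirely.

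The main obstacle will be the bookkeeping for the transverse directions: one must identify, for each $w$, the $\mathfrak{n}$-directions that move off the cell $P w^{\ast} K_M N$ and check that the $\chi$-equivariance condition there is obstructed by Lemma 10 — and do this uniformly enough that the induction on strata closes. In the finite-dimensional $\sigma$ case this is classical (it underlies Lynch's vanishing theorem and the results of [W1]); here one also has to make sure the distributions one produces are honestly $V_\sigma'$-valued tempered/continuous objects so that the $K$--$V$ structure theorem (theorem 3.11 of [K-V]) and the stratification of equivariant distributions apply with the infinite-dimensional coefficient system. I would handle that by working cell-by-cell with $C_c^\infty$ test functions valued in $V_\sigma$ as in the construction of $\bar\lambda$, so that at each stage one is dealing with an ordinary equivariant distribution valued in the fixed Fréchet space $V_\sigma'$, reducing the transversality analysis to the scalar case already covered by Lemma 10.
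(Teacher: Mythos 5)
Your overall strategy---pass to an equivariant distribution supported off the big cell, stratify the complement by the $P_\circ\times K_M N$ orbits from Lemma~\ref{lemma:bruhat}, and use Lemma 10 (nontriviality of $\chi$ on $N\cap w^{\ast}N_o(w^{\ast})^{-1}$ for $w\neq w^M$) to kill the contribution from each small cell---is indeed the skeleton of the argument. But there is a genuine gap in how you propose to handle the infinite-dimensionality of $V_\sigma$, and this gap is precisely the reason the paper's proof has an extra step you have omitted.

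The paper does \emph{not} run the Kolk--Varadarajan stratification with a $V_\sigma'$-valued distribution. It first reduces to a finite-dimensional coefficient system: by the Casselman--Wallach theorem, $\sigma$ is a quotient of some $I^{\infty}_{P_M,\xi}$ with $\xi$ finite dimensional, and by induction in stages $I^{\infty}_{P,\pi_\xi,\nu}\cong I^{\infty}_{P_\circ,\eta}$ for a finite-dimensional representation $\eta$ of the \emph{minimal} parabolic $P_\circ$. Pulling back along the surjection $\tilde L$ reduces the whole problem to $I^{\infty}_{P_\circ,\eta}$, and only then is the distribution $\bar\lambda\in D'(G:F)$ (with $F$ finite-dimensional) formed and fed into [KV] Theorem 3.9. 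This reduction is not a cosmetic convenience. The vanishing argument on each small stratum requires \emph{two} facts: that $\chi$ is nontrivial on the normal $N$-directions (your Lemma 10, which you correctly identify), and that $N_\circ$ acts \emph{unipotently} on $U(\mathfrak g)\otimes F'$, which controls the transverse-derivative tower. The second fact uses $F'$ finite-dimensional in an essential way; it is simply false for $V_\sigma'$ infinite-dimensional, where the $M_\circ A_\circ$ part of the $P_\circ$-action on $V_\sigma'$ is not nilpotent and need not even be locally finite.

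Your proposed remedy---``working cell-by-cell with $C_c^\infty$ test functions valued in $V_\sigma$\ldots\ reducing the transversality analysis to the scalar case''---does not close this gap. The transverse symbol along a stratum is not a scalar object: it carries the full action of $\mathfrak g$ on the normal bundle, tensored with the (infinite-dimensional) coefficient module, so the $M$-part of $V_\sigma'$ genuinely enters the analysis. Without the reduction to finite-dimensional $F'$ one has neither the [KV] structure theorem in the form needed nor the crucial unipotency of the transverse action. So the correct fix is the one the paper makes: invoke Casselman--Wallach and induction in stages to replace $V_\sigma$ by a finite-dimensional $F$ up front, and only afterward run the stratification and Lemma 10 argument you describe. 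You should also state explicitly that the unipotency of $N_\circ$ on $U(\mathfrak g)\otimes F'$ is the other ingredient of the vanishing; your write-up leans entirely on the $\chi$-nontriviality, which by itself is not enough.
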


\begin{proof}
We will first reduce the problem to the case where $\sigma$ is an induced representation. If $(\eta,V)$ is a finite dimensional representation of $P_{\circ}$, we define $I^{\infty}_{P_{\circ},\eta}$ to be the space of smooth $\phi:G \longrightarrow V$ such that $\phi(pg)=\eta(p)\phi(g)$ for $p\in P_{\circ}$ and $g\in G$. Set $\pi_{\eta}(g)\phi(x)=\phi(xg)$, for $x,g\in G$. If we endow $I^{\infty}_{P_{\circ},\eta}$ with the $C^{\infty}$ topology, then $(\pi_{\eta},I^{\infty}_{P_{\circ},\eta})$ is an admissible, finitely generated smooth Fr\'echet representation of moderate growth. Let $(\xi,F)$ be a finite dimensional representation of $P_{M}:=P_{\circ}\cap M$ and let $(\pi_{\xi},I^{\infty}_{P_{M},\xi})$ be the corresponding representation of $M$ (Observe that $P_{\circ}\cap M$ is a minimal parabolic subgroup of $M$). The Casselman-Wallach theorem implies that there exists a surjective, continuous, $M$-intertwining operator $L:I^{\infty}_{P_{M},\xi} \longrightarrow V_{\sigma}$ for some finite dimensional representation $(\xi, F)$ of $P_{M}$. This map lifts to a surjective $G$ interwining map $\tilde{L}:I^{\infty}_{P,\pi_{\xi},\nu} \longrightarrow I^{\infty}_{P,\sigma,\nu}$, given by $\tilde{L}(\phi)(g)=L(\phi(g))$ for $\phi \in I^{\infty}_{P,\pi_{\xi},\nu}$, $g\in G$. The representation  $I^{\infty}_{P,\pi_{\xi},\nu}$ is equivalent to the representation smoothly induced from $P_{\circ}$ to $G$ by the representation $\xi_{\nu}$ of $P_{\circ}$ with values on $F$ defined as follows:
\[
 \xi_{\nu}(nap)=a^{\nu+\rho}\xi(p) \qquad \mbox{for $p\in P_{M}$, $a\in A$, $n\in N$.}
\]
Setting $\eta=\xi_{\nu}$ we can identify the map $\tilde{L}$ with a surjective $G$-equivariant map $\tilde{L}:I^{\infty}_{P_{\circ},\eta} \longrightarrow I^{\infty}_{P,\sigma,\nu}$. Set $U_{P_{\circ},\eta}=\{\phi\in I^{\infty}_{P_{\circ},\eta} \, | \, \mbox{$\operatorname{supp} \phi \subset P(w^{M})^{\ast}N$}\}$, and define $Wh_{\chi}(I^{\infty}_{P_{\circ},\eta})$ in the same way as above. Assume that we have proved the proposition for $I^{\infty}_{P_{\circ},\eta}$, i.e., assume that if $\lambda \in Wh_{\chi}(I^{\infty}_{P_{\circ},\eta})$ and $\lambda|_{U_{P_{\circ},\eta}}=0$, then $\lambda=0$. Let $\lambda\in Wh_{\chi}(I^{\infty}_{P,\sigma,\nu})$ be such that $\lambda|_{U_{P,\sigma,\nu}}=0$ and let $\tilde{\lambda}=\tilde{L}^{\ast}\lambda$ be the pullback of $\lambda$ to $Wh_{\chi}(I^{\infty}_{P_{\circ},\eta})$ by $\tilde{L}$. It's easy to check, using the definition of $\tilde{L}$, that $\tilde{\lambda}|_{U_{P_{\circ},\eta}}=0$ and hence, by our assumptions, $\tilde{\lambda}=\tilde{L}^{\ast}\lambda=0$, but $\tilde{L}$ is surjective, therefore $\lambda=0$. We will now prove the proposition for $Wh_{\chi}(I^{\infty}_{P_{\circ},\eta})$.

Let $D'(G:F)$ be the space of all $F'$-valued distributions on $G$, i.e., the space of all continuous maps form $C_{c}^{\infty}(G)$ to $F'$. Let $\lambda \in Wh_{\chi}(I^{\infty}_{P_{\circ},\eta})$ be such that $\lambda|_{U_{P_{\circ},\eta}}=0$. We will define a distribution $\bar{\lambda}\in D'(G:F)$ in the following way: given $\phi\in C_{c}^{\infty}(G)$, and $v\in V_{\sigma}$, define $\bar{\phi}_{\nu}\in I^{\infty}_{P_{\circ},\eta}$ by
\[
 \bar{\phi}_{\nu}(g)=\int_{P}\eta(p)^{-1}\phi(pg)v\, d_{r}p,
\]
 and set $\bar{\lambda}(\phi)(v)=\lambda(\bar{\phi}_{\nu})$. It's easy to check that $\bar{\lambda}(L_{n_{\circ}}\phi)=[\eta(n_{\circ})^{\ast}]\bar{\lambda}(\phi)$ and $\bar{\lambda}(R_{n}\phi)=\chi(n)^{-1}\bar{\lambda}(\phi)$, i.e., $\bar{\lambda}$ is actually in $D'(G:F\otimes \mathbb{C}_{\chi})^{N_{\circ}\times N}$ ($\chi(n)^{\ast}=\chi(n)^{-1}$). We note that $K_{M}=M^{f}$, thus $K_{M}=\{m\in M \, | \, \chi \circ Ad(m)=\chi\}$. Hence we can extend the action of $N_{\circ}\times N$ on $F'\otimes \mathbb{C}_{\chi^{-1}}$ to an action of $P_{\circ}\times K_{M}N$. Furthermore, according to lemma \ref{lemma:bruhat}, $G$ has a finite orbit decomposition under the action of $P_{\circ}\times K_{M}N$. Hence if we set $H=P_{\circ}\times K_{M}N$, and $H'=N_{\circ}\times N$ we are in the setting of the vanishing theorem 3.9 of [KV]. To finish the proof of the proposition we just need to show that $\bar{\lambda}=0$. Observe that, since $\lambda|_{U_{P,\sigma,\nu}}=0$, $\bar{\lambda}=0$ on the big Bruhat cell $P(w^{M})^{\ast}N$. The proof of the vanishing of $\bar{\lambda}$ is now completely analogous to the proof of theorem 4.1 in [KV] and follows from the fact that $N_{\circ}$ acts unipotently on $U(\mathfrak{g})\otimes F'$ and $\chi$ is non-trivial on $N\cap w^{\ast}N_{\circ}(w^{\ast})^{-1}$ if $w\in W^{M}$ and $w\neq w^{M}$.
\end{proof}

\section{The main theorem}

We are now ready to combine all of the material of the previous sections. The
$e,f,h$ in section \ref{sec:tensoring} do not agree with those in sections \ref{sec:examples} and \ref{sec:bruhat}. In the
formulas in section 1 we must use $e\rightarrow\frac{1}{it}e,f\rightarrow itf$
and an appropriate normalization of $B$. With this in mind we denote by $Q$
the operator corresponding to these choices. We retain all of the notation of
the previous sections (with the proviso above). Let $F$ be a finite
dimensional representation of $G$. Let $r$ be as in Proposition 6. Set
\[
\Gamma_{F}=\frac{(-1)^{r}}{r!}\prod_{i=1}^{r}(Q-i).
\]
Then Proposition 6 says that if $M\in\mathcal{M}_{\chi}$ then
\[
\Gamma_{F}:M_{1}\otimes F\rightarrow(M\otimes F)_{1}%
\]
is bijective. We also note that the definition of $Q$ immediately implies

\begin{lemma}
With the notation above $Q\in U(\mathfrak{g}_{%
\mathbb{C}
})^{K_{M}}$.
\end{lemma}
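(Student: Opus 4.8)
The plan is to exploit the observation made immediately after the definition of $Q$ in Section~3 --- that $Q$ depends only on the triple $e,f,h$ and the choice of invariant form $(\cdot,\cdot)$ --- together with the identification $K_{M}=M^{f}$ established in the first lemma of Section~2. Since $Ad(m)$ for $m\in K_{M}$ is an algebra automorphism of $U(\mathfrak{g}_{\mathbb{C}})$ that fixes all of the data out of which $Q$ is manufactured, it must fix $Q$; all that is required is to verify this carefully and to record the basis-independence of the formula for $Q$.

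First I would note that every $m\in K_{M}=M^{f}$ satisfies $Ad(m)h=h$ and $Ad(m)f=f$ by the definitions of $M$ and $M^{f}$, and $Ad(m)e=e$ by the argument in the proof of the first lemma of Section~2; hence $Ad(m)$ preserves the eigenspace decomposition $\mathfrak{g}=\bar{\mathfrak{n}}\oplus\mathfrak{m}\oplus\mathfrak{n}$, in particular it preserves $\mathfrak{n}$ and $\mathfrak{m}$. Next, the form $(\cdot,\cdot)$ is $Ad(G)$-invariant by hypothesis, so for $u,v\in\mathfrak{n}$ we get $q(Ad(m)u,Ad(m)v)=(Ad(m)[f,u],Ad(m)[f,v])=([f,u],[f,v])=q(u,v)$ using $Ad(m)f=f$; thus $Ad(m)$ is a $q$-isometry of $\mathfrak{n}$. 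Likewise $\chi(Ad(m)x)=(f,Ad(m)x)=(Ad(m^{-1})f,x)=(f,x)=\chi(x)$, so $\chi$ is $Ad(m)$-invariant on $\mathfrak{n}$. Finally I would observe that $Q$ does not depend on the choice of $q$-orthonormal basis: if $y_{1},\dots,y_{d}$ is another one, write $y_{i}=\sum_{j}a_{ij}x_{j}$ with $(a_{ij})$ orthogonal, expand, and use $\sum_{i}a_{ij}a_{ik}=\delta_{jk}$ to get $\sum_{i}[f,y_{i}](y_{i}-\chi(y_{i}))=\sum_{i}[f,x_{i}](x_{i}-\chi(x_{i}))=Q$; equivalently $Q$ is the image of the basis-free element $\sum_{i}x_{i}\otimes x_{i}\in\mathfrak{n}\otimes\mathfrak{n}$ under the linear map $u\otimes v\mapsto[f,u](v-\chi(v))$ into $U(\mathfrak{g}_{\mathbb{C}})$.

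Putting these together, for $m\in K_{M}$ apply $Ad(m)$ to $Q$: since $Ad(m)f=f$ and $\chi\circ Ad(m)=\chi$, one has $Ad(m)Q=\sum_{i}[f,Ad(m)x_{i}](Ad(m)x_{i}-\chi(Ad(m)x_{i}))=\sum_{i}[f,y_{i}](y_{i}-\chi(y_{i}))$ with $y_{i}=Ad(m)x_{i}$ a $q$-orthonormal basis by the isometry property, and by basis-independence this equals $Q$. Hence $Q\in U(\mathfrak{g}_{\mathbb{C}})^{K_{M}}$. There is no deep obstacle here; the points needing attention are bookkeeping ones. One must keep in mind the proviso at the start of Section~5 that the $e,f,h$ relevant there are the rescaled versions ($e\to\frac{1}{it}e$, $f\to itf$, with $B$ renormalized): rescaling $f$ does not change its stabilizer $M^{f}=K_{M}$ nor the $Ad(K_{M})$-invariance of the form, so the argument is unaffected. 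One should also note that $K_{M}$ (e.g.\ $O(n)$) need not be connected, so ``invariant'' genuinely means $Ad(K_{M})$-fixed rather than merely annihilated by $\mathfrak{m}^{f}$ --- which is exactly what the displayed computation yields.
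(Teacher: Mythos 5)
Your argument is correct and is precisely the verification the paper considers immediate from the definition (the paper offers no proof beyond ``the definition of $Q$ immediately implies''): $K_{M}=M^{f}$ fixes $e,f,h$, hence preserves $q$ and $\chi$ on $\mathfrak{n}$, carries one $q$-orthonormal basis to another, and $Q$ is basis-independent. You have also correctly flagged the two delicate points --- that $K_{M}$ may be disconnected so one needs genuine $Ad(K_{M})$-invariance rather than $\mathfrak{m}^{f}$-annihilation, and that the rescaling of $e,f$ at the start of Section~5 leaves $M^{f}$ and the invariance unaffected.
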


We are now left with the proof of the surjectivity of $\Phi_{P,\sigma,\nu}$. As in [W1] we will first show that $\Phi_{P,\sigma,\nu}$ is a bijection for $\nu$ in an open set of $\mathfrak{a}'$, and then we will use the results in section \ref{sec:tensoring} to show that $\Phi_{P,\sigma,\nu}$ is an isomorphism for all $\nu \in \mathfrak{a}'$

\begin{lemma}\label{lemma:hyperplane}
 There exists a constant $q_{\sigma}$ such that
\[
 J_{P,\sigma,\nu}^{\chi}(\phi)=\int_{N}\chi(n)^{-1}\phi_{P,\sigma,\nu}((w^M)^{\ast}n)\, dn
\]
converges absolutely and uniformly in compacta of $\{\nu \in \mathfrak{a}^{\ast}_{\mathbb{C}}|\operatorname{Re} \nu > q_{\sigma}\}$ for all $\phi\in I^{\infty}_{K\cap M}$. Furthermore, if $\nu\in \mathfrak{a}^{\ast}_{\mathbb{C}}$, $\operatorname{Re} \nu > q_{\sigma}$, and $\mu \in V_{\sigma}'$, $\mu \neq 0$, then $\mu \circ J^{\chi}_{P,\sigma,\nu}\neq 0$ and
\[
\Phi_{P,\sigma,\nu}:Wh_{\chi}(I^{\infty}_{P,\sigma,\nu}) \longrightarrow V_{\sigma}'
\]
is an isomorphism.
\end{lemma}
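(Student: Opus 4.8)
The plan is to establish the two claims of the lemma in sequence. First, for the convergence: the integrand $\chi(n)^{-1}\phi_{P,\sigma,\nu}((w^M)^\ast n)$ is, up to the unitary factor $\chi(n)^{-1}$ which has absolute value $1$, governed by the Harish-Chandra-type asymptotics of the function $n\mapsto \phi_{P,\sigma,\nu}((w^M)^\ast n) = a(n)^{\nu+\rho}\sigma(m(n))\phi(k(n))$, where $(w^M)^\ast n = n'\,a(n)m(n)k(n)$ is the Iwasawa-type decomposition relative to $P=MAN$. Since $\sigma$ has moderate growth, $\|\sigma(m(n))\|$ is bounded by a polynomial in $\log\|m(n)\|$ times a fixed character; combining this with the standard estimate $a(n)^{\rho}\to 0$ rapidly as $n\to\infty$ in $N$, one gets convergence of $\int_N \|a(n)^{\nu+\rho}\|\,\|\sigma(m(n))\|\,dn$ once $\operatorname{Re}\nu$ is large enough in the positive cone. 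The threshold $q_\sigma$ is exactly the amount by which $\operatorname{Re}\nu$ must beat $\rho$ to absorb the moderate-growth exponent of $\sigma$; uniformity on compacta in $\nu$ is automatic since the estimates are locally uniform. This is essentially the classical argument of Jacquet/Wallach and I would only sketch it, citing [W1].

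Next, injectivity of $\mu\mapsto \mu\circ J^\chi_{P,\sigma,\nu}$ for $\operatorname{Re}\nu > q_\sigma$: here the point is that $J^\chi_{P,\sigma,\nu}:I^\infty_{K\cap M}\to V_\sigma'$, being continuous, $K_M$-equivariant, and (by the convergent integral) computable, is \emph{surjective}; indeed for a bump function $\phi$ supported near $e\in K$ with $\phi(e)=v^\ast$ evaluated appropriately, the integral concentrates and recovers any prescribed functional on $V_\sigma$ — this is the same local-surjectivity computation as in the reduction via [K-V, Theorem 3.11] already carried out in Section \ref{sec:bruhat}. Given surjectivity of $J^\chi_{P,\sigma,\nu}$, if $\mu\neq 0$ then $\mu\circ J^\chi_{P,\sigma,\nu}\neq 0$. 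Combined with injectivity of $\Phi_{P,\sigma,\nu}$ from Proposition \ref{prop:upperbound} (valid for all $\nu$), what remains is surjectivity of $\Phi_{P,\sigma,\nu}$ for $\operatorname{Re}\nu>q_\sigma$: given $\mu\in V_\sigma'$, the absolutely convergent integral defines $\lambda := \mu\circ J^\chi_{P,\sigma,\nu}$, which lies in $Wh_\chi(I^\infty_{P,\sigma,\nu})$ because $J^\chi_{P,\sigma,\nu}(\pi(n)^{-1}\phi) = \chi(n)J^\chi_{P,\sigma,\nu}(\phi)$ by the translation-invariance of Haar measure on $N$ and the defining covariance; and by formula (\ref{eq:KV}) one checks directly that $\Phi_{P,\sigma,\nu}(\lambda)=\mu$. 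Hence $\Phi_{P,\sigma,\nu}$ is a bijection in this range.

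The main obstacle is pinning down the convergence threshold $q_\sigma$ and making the asymptotic estimate on $\|\sigma(m(n))\|$ genuinely uniform in $\nu$ over compacta — this requires the correct form of the moderate-growth bound for $\sigma$ transported through the $M$-part $m(n)$ of the Iwasawa decomposition of $(w^M)^\ast n$, together with the precise decay of $a(n)^\rho$. Everything else (the equivariance identities, the identification $\Phi_{P,\sigma,\nu}(\mu\circ J^\chi_{P,\sigma,\nu})=\mu$, and the local-surjectivity bump-function argument) is a routine unwinding of the definitions and of (\ref{eq:KV}).
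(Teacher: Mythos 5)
Your argument is essentially the paper's own: convergence is delegated to the standard Jacquet-integral estimates of [W1], and the isomorphism is obtained by using the absolutely convergent integral together with formula (\ref{eq:KV}) to exhibit $\mu\mapsto\mu\circ J^{\chi}_{P,\sigma,\nu}$ as the inverse of $\Phi_{P,\sigma,\nu}$ (with injectivity of $\Phi_{P,\sigma,\nu}$ supplied by Proposition~\ref{prop:upperbound}). One small slip worth fixing: $J^{\chi}_{P,\sigma,\nu}$ takes values in $V_{\sigma}$, not $V_{\sigma}'$, so your bump-function computation shows surjectivity onto $V_{\sigma}$, which is what forces $\mu\circ J^{\chi}_{P,\sigma,\nu}\neq 0$ for $\mu\neq 0$; this step is in fact already a consequence of the identity $\Phi_{P,\sigma,\nu}(\mu\circ J^{\chi}_{P,\sigma,\nu})=\mu$ that you verify afterward via (\ref{eq:KV}).
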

\begin{proof}
 The convergence result can be proved using the same arguments as in [W1]. To prove the isomorphism statement observe that, since $J_{P,\sigma,\nu}^{\chi}(\phi)$ is absolutely convergent for $\mbox{Re}\,\nu > q_{\sigma}$, we can use equation (\ref{eq:KV}) to define the inverse of $\Phi_{P,\sigma,\nu}$.
\end{proof}

\begin{theorem}
With notation and assumptions as in the end of last section.

\begin{description}
 \item[i)] The map
\[
\Phi_{P,\sigma,\nu}:Wh_{\chi}(I_{P,\sigma,\nu}^{\infty}) \longrightarrow V_{\sigma}'
\]
defines a $K_{M}$-equivariant isomorphism for all $\nu \in \mathfrak{a}^{\ast}_{\mathbb{C}}$.
\item[ii)] For all $\mu\in V_{\sigma}'$ the map $\nu\mapsto \mu\circ J_{P,\sigma,\nu}^{\chi}$ extends to a weakly holomorphic map of $\mathfrak{a}^{\ast}_{\mathbb{C}}$ into $(I^{\infty}_{K\cap M,\sigma|_{K\cap M}})'$.
\end{description}
\end{theorem}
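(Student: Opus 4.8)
The plan is to combine the open-set isomorphism of Lemma \ref{lemma:hyperplane} with the tensoring machinery of section \ref{sec:tensoring}, using a finite dimensional representation $F$ of $G$ as a ``translation'' device to push $\nu$ into the region $\operatorname{Re}\nu > q_\sigma$ where everything is already known. For part ii), the key observation is that for $\mu\in V_\sigma'$ and $\mu'\in F'$, the composite $\mu\otimes\mu'$ restricted to $I^\infty_{P,\sigma,\nu}\otimes F$ — which by the standard tensor identity is a summand of $I^\infty_{P,\sigma\otimes F|_P,\,\nu+(\text{shift})}$ decomposed along the $A$-weights of $F$ — lets us relate $J^\chi_{P,\sigma,\nu}$ at $\nu$ to $J^\chi$-integrals at the shifted parameters $\nu+\lambda$, $\lambda$ a weight of $F$ on $A$. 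By choosing $F$ so that some weight shift lands $\operatorname{Re}(\nu+\lambda)$ in the convergence region, and using that $I^\infty_{P,\sigma,\nu}\otimes F\in\mathcal{M}_\chi$ as a $\mathfrak{g}$-module, the operator $\Gamma_F=\frac{(-1)^r}{r!}\prod_{i=1}^r(Q-i)$ of Proposition 6 gives a bijection $(I^\infty_{P,\sigma,\nu})_1\otimes F \to (I^\infty_{P,\sigma,\nu}\otimes F)_1$ that is algebraic in $\nu$ (since $Q\in U(\mathfrak{g})^{K_M}$ by Lemma 8 acts by operators depending polynomially on $\nu$ through $\pi_{P,\sigma,\nu}$). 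This produces a functional equation expressing $\mu\circ J^\chi_{P,\sigma,\nu}$, multiplied by a nonzero polynomial in $\nu$, as a finite linear combination of $J^\chi$-integrals at shifted parameters composed with $\Gamma_F$, and since the latter are holomorphic on a translate of the convergence region, one gets meromorphic continuation; the polynomial turns out to be forced to be nonvanishing, giving weak holomorphy on all of $\mathfrak{a}^\ast_{\mathbb{C}}$.

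In more detail, I would first fix $\nu_0$ and choose a finite dimensional $F$ with an $A$-weight $\lambda_0$ such that $\operatorname{Re}(\nu_0+\lambda_0)>q_\sigma$; then decompose $I^\infty_{P,\sigma,\nu_0}\otimes F$ into a filtration by $A$-weights of $F$ whose subquotients are the $I^\infty_{P,\sigma\otimes F^{(j)},\,\nu_0+\lambda_j}$. Apply $\Gamma_F$: on the one hand $\Gamma_F$ maps the Bessel-vector space $Wh_\chi$ of the source isomorphically (by Proposition 6 applied with $M=$ the relevant induced module, noting Bessel functionals correspond to the $M_1$-type spaces dually), on the other hand one can track $J^\chi$ through $\Gamma_F$ explicitly because $\Gamma_F$ is given by a universal enveloping algebra element and the defining integral for $J^\chi$ intertwines the $\mathfrak{g}$-action with differentiation along $N$. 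The upshot is an identity of the shape
\[
p(\nu)\,\bigl(\mu\circ J^\chi_{P,\sigma,\nu}\bigr) = \sum_{j}\bigl(\mu_j\circ J^\chi_{P,\sigma_j,\nu+\lambda_j}\bigr)\circ(\text{explicit differential operator in }\nu),
\]
with $p$ a nonzero polynomial (a product of the scalars $Q-i$ acts on the relevant $A$-weight line). Iterating this — using different $F$'s to reach every $\nu$ — or directly using that $\Gamma_F$ is a \emph{bijection} (so $p(\nu)\neq0$ throughout, as nonvanishing of the map forces nonvanishing of its scalar on each weight line) yields holomorphy everywhere rather than just meromorphy. The weakly-holomorphic target $(I^\infty_{K\cap M,\sigma|_{K\cap M}})'$ is the right home because after the Iwasawa restriction $\phi\mapsto\phi|_K$ the dependence on $\nu$ of $\phi_{P,\sigma,\nu}$ on $(w^M)^\ast N$ is through the explicit factor $a^{\nu+\rho}$, manifestly holomorphic, so convergence plus the functional equation gives weak holomorphy on the nose.

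For part i), once ii) is established the argument is formal: Proposition \ref{prop:upperbound} (via the preceding lemma) already gives injectivity of $\Phi_{P,\sigma,\nu}$ for all $\nu$; for surjectivity, given $\mu\in V_\sigma'$, part ii) provides a holomorphic family $\nu\mapsto\mu\circ J^\chi_{P,\sigma,\nu}\in (I^\infty_{K\cap M})'$, and one must check this is, at each $\nu$, the $\bar\lambda$-pairing coming from an honest element $\lambda\in Wh_\chi(I^\infty_{P,\sigma,\nu})$ — i.e. that it lies in the correct continuous dual and is $\overline{N}$- (equivalently $N$-via $w_M$) equivariant. Equivariance is automatic from the $R_n$-behaviour built into the integrand of $J^\chi$; continuity on $I^\infty_{P,\sigma,\nu}$ (not just on the compact picture) follows from the convergence estimates of Lemma \ref{lemma:hyperplane} together with the Casselman--Wallach automatic continuity theorem applied to the holomorphic family, exactly as in [W1]. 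Then $\Phi_{P,\sigma,\nu}(\lambda)=\mu$ by the formula \eqref{eq:KV} relating $\bar\lambda$, $\mu_\lambda$ and $J^\chi$. The $K_M$-equivariance of $\Phi_{P,\sigma,\nu}$ holds on the convergence region by inspection of \eqref{eq:KV} (using $K_M=M^f$ stabilizes $\chi$ and normalizes the setup) and extends by holomorphic continuation.

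\textbf{Main obstacle.} The delicate point is the passage ``$\Gamma_F$ bijective $\Rightarrow$ the scalar polynomial $p(\nu)$ never vanishes,'' i.e. extracting genuine holomorphy (no poles) rather than mere meromorphic continuation, and simultaneously verifying that the functional-equation manipulation is valid at the level of continuous functionals on the Fréchet space — one must ensure that applying the universal enveloping algebra element $\Gamma_F$, which a priori only makes sense on smooth vectors, interacts correctly with the distributional extension $\bar\lambda$ and with the non-absolutely-convergent integral $J^\chi$ outside the convergence cone. Handling this requires carefully staying within $U_{P,\sigma,\nu}$ (where $\bar f_v$ lives and where the integral \emph{is} absolutely convergent) and then transferring to all of $I^\infty_{P,\sigma,\nu}$ via the surjectivity/density already used in Proposition \ref{prop:upperbound} — this bookkeeping, rather than any single hard estimate, is where the real work lies.
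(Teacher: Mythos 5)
Your broad strategy — tensor with a finite dimensional $F$, use the weight decomposition of $F$ under $h$ and $A$, apply the operator $\Gamma_F=\frac{(-1)^r}{r!}\prod(Q-i)$ from Proposition 6, and shift from the convergence region of Lemma \ref{lemma:hyperplane} — is exactly the paper's strategy, and the ingredients you list (Proposition \ref{prop:upperbound} for injectivity, $Q\in U(\mathfrak{g})^{K_M}$ for $K_M$-equivariance, the approximate-identity/support-in-the-big-cell device) are all the right ones. However, the organization differs in two substantive ways, and the ``main obstacle'' you single out is actually an artifact of your organization rather than a genuine difficulty in the paper's argument.

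First, the paper proves (i) before (ii): it never writes a shift identity for the integrals $\mu\circ J^\chi_{P,\sigma,\nu}$ directly, because outside the convergence cone those integrals have no a priori meaning. Instead it works throughout on the side of the Whittaker-\emph{functional} spaces $Wh_\chi(\cdot)$, which are defined for every $\nu$. The two commutative diagrams express: (a) $\check\Gamma$ is an isomorphism $Wh_\chi(I^\infty_{P,\sigma_\nu})\otimes F'\to Wh_\chi(I^\infty_{P,\sigma_\nu\otimes\eta})$ (from Proposition 6 by duality), and (b) the induced map $\tilde\Gamma$ on $V'_\sigma\otimes F'$ is unipotent-triangular with respect to the dual weight filtration $Y^j$ of $F'$ — the key Claim is proved precisely by showing $\mu_\lambda-(I\otimes\eta(w_M^{-1})^\ast)\tilde\Gamma(\mu_\lambda)$ drops one step in that filtration, via the approximate-identity computation. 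This unipotence is what replaces your worry about a scalar polynomial $p(\nu)$ vanishing: there is no such $p(\nu)$, because $\Gamma$ is a fixed element of $U(\mathfrak{g})$ independent of $\nu$ and its bijectivity on $M_1\otimes F\to(M\otimes F)_1$ is a theorem (Proposition 6), not something that degenerates on a subvariety. The possibility of ``mere meromorphy'' never arises. Part (ii) is then a bookkeeping corollary: $\gamma_\mu(\nu):=\Phi_{P,\sigma,\nu}^{-1}(\mu)\circ(\phi\mapsto\phi_{P,\sigma,\nu})$ is already defined for all $\nu$ once (i) is known, it agrees with $\mu\circ J^\chi_{P,\sigma,\nu}$ in the convergence region, and the diagram chase produces the exact shift identity $\gamma_\mu(\nu-r)(\phi)=\sum_j\gamma_{\eta_j}(\nu)(\phi_j)$ that propagates holomorphy.

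Second, your route to surjectivity — prove (ii) first, then obtain a Bessel functional at each $\nu$ by invoking Casselman--Wallach automatic continuity on the holomorphic family — is not what the paper does and, as you set it up, is circular: your derivation of the shift equation for $J^\chi$ already implicitly needs $\Phi_{P,\sigma,\nu}$ to be bijective at shifted parameters. The paper breaks the circle by iterating downward from $\operatorname{Re}\nu>q_\sigma$: once $\Phi_{P,\sigma_\nu}$ is an isomorphism, the Claim plus diagram (\ref{diag2}) force $\Phi_{P,\sigma_\nu\otimes\bar\eta_j}$ to be an isomorphism for all $j$, in particular for $j=r$, which (choosing $F$ with trivial $M$-action on $F_r$) is $\Phi_{P,\sigma_{\nu-rh'}}$. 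No automatic continuity is needed; $\Phi_{P,\sigma,\nu}$ is already defined for every $\nu$ by the Kolk--Varadarajan distributional argument in section \ref{sec:bruhat}, so there is nothing to ``extend.'' If you want to pursue your route, you would still need to establish, independently of (i), that the continued functional lies in the continuous dual of $I^\infty_{P,\sigma,\nu}$ and is $\chi$-equivariant — this is a real gap, and the paper's filtration argument is precisely the device that fills it without appealing to a separate automatic continuity theorem.
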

This result immediately implies
\begin{corollary} Let $F_{\tau}$ be an irreducible, finite dimensional representation of $K_{M}$, and let
$Wh_{\chi,\tau}(I^{\infty}_{P,\sigma,\nu})$ be the space of all continuous linear maps $T:I_{P,\sigma,\nu}^{\infty}\rightarrow F_{\tau}$ such that
\[
 T(\pi_{P,\sigma,\nu}(kn)\phi)=\chi(n)^{-1}\tau(k)T(\phi)
\]
for $k\in K_{M}$, $n\in N$. Then
 \[
  \dim Wh_{\chi,\tau}(I^{\infty}_{P,\sigma,\nu})=\dim Hom_{K_{M}}(V_{\sigma},F_{\tau})
 \]
\end{corollary}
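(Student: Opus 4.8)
The plan is to deduce the Corollary purely formally from part (i) of the Theorem, i.e.\ from the $K_{M}$-equivariant isomorphism $\Phi_{P,\sigma,\nu}\colon Wh_{\chi}(I^{\infty}_{P,\sigma,\nu})\longrightarrow V_{\sigma}'$, together with the semisimplicity of continuous finite-dimensional representations of the compact group $K_{M}$. Write $V=I^{\infty}_{P,\sigma,\nu}$. First I would realize $Wh_{\chi,\tau}(V)$ as a $\operatorname{Hom}$-space: a continuous linear $T\colon V\to F_{\tau}$ is the same datum as a linear map $\widehat{T}\colon F_{\tau}'\to V'$, $\xi\mapsto \xi\circ T$ (automatically continuous, the domain being finite-dimensional). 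The special case $k=1$ of the defining relation says $\widehat{T}(\xi)\bigl(\pi_{P,\sigma,\nu}(n)\phi\bigr)=\chi(n)^{-1}\widehat{T}(\xi)(\phi)$, i.e.\ $\widehat{T}$ takes values in $Wh_{\chi}(V)$; and, giving $Wh_{\chi}(V)$ the $K_{M}$-action $(k\cdot S)(\phi)=S\bigl(\pi_{P,\sigma,\nu}(k)^{-1}\phi\bigr)$ (well defined since $K_{M}$ normalizes $N$ and fixes $\chi$), a short computation shows that the full relation $T(\pi_{P,\sigma,\nu}(kn)\phi)=\chi(n)^{-1}\tau(k)T(\phi)$ is equivalent to $\widehat{T}$ being $K_{M}$-equivariant from the contragredient $F_{\tau}'$ to $Wh_{\chi}(V)$. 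Hence
\[
Wh_{\chi,\tau}(V)\ \cong\ \operatorname{Hom}_{K_{M}}\!\bigl(F_{\tau}',\,Wh_{\chi}(V)\bigr).
\]

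Next I would apply the Theorem. Since $\Phi_{P,\sigma,\nu}$ is a $K_{M}$-equivariant isomorphism, composing with it gives
\[
Wh_{\chi,\tau}(V)\ \cong\ \operatorname{Hom}_{K_{M}}\!\bigl(F_{\tau}',\,V_{\sigma}'\bigr),
\]
where $V_{\sigma}'$ carries its natural $K_{M}$-action. Because $F_{\tau}'$ is finite-dimensional, any continuous $K_{M}$-map out of it lands in the $K_{M}$-finite part of $V_{\sigma}'$, which by admissibility of $\sigma$ and compactness of $K_{M}$ is the algebraic dual of the $K_{M}$-finite vectors of $V_{\sigma}$ and therefore decomposes with the same finite multiplicities as $V_{\sigma}$ after the twist $F\mapsto F'$. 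Dualizing and using Peter--Weyl and Schur for $K_{M}$ then yields
\[
\dim\operatorname{Hom}_{K_{M}}\!\bigl(F_{\tau}',\,V_{\sigma}'\bigr)=\dim\operatorname{Hom}_{K_{M}}\!\bigl(V_{\sigma},\,F_{\tau}\bigr),
\]
both sides being the multiplicity of $\tau$ in $\sigma|_{K_{M}}$. Combining the three displays gives the claimed equality $\dim Wh_{\chi,\tau}(I^{\infty}_{P,\sigma,\nu})=\dim\operatorname{Hom}_{K_{M}}(V_{\sigma},F_{\tau})$.

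I do not expect a real obstacle here: all the analytic content sits in the Theorem, and what remains is the extraction of isotypic multiplicities from a $K_{M}$-equivariant isomorphism. The only points requiring routine care are the bookkeeping of contragredients in the identification of $Wh_{\chi,\tau}(V)$ with a $\operatorname{Hom}_{K_{M}}$-space --- harmless for dimensions, since multiplicities on a compact group are invariant under $F\mapsto F'$ --- and the functional-analytic remark that continuous $K_{M}$-maps from a finite-dimensional representation into the topological dual $V_{\sigma}'$ take values among the $K_{M}$-finite vectors, so that, by admissibility of $\sigma$, the relevant $\operatorname{Hom}$-spaces are finite-dimensional and are computed from the $K_{M}$-finite part of $\sigma$ alone.
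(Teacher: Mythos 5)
Your proof is correct and follows the same route as the paper: the paper states only that the corollary ``immediately'' follows from Theorem~(i), and your argument is precisely the careful unwinding of that immediate deduction, identifying $Wh_{\chi,\tau}(I^{\infty}_{P,\sigma,\nu})$ with $\operatorname{Hom}_{K_M}(F_{\tau}',Wh_{\chi}(I^{\infty}_{P,\sigma,\nu}))$, transporting through the $K_M$-equivariant isomorphism $\Phi_{P,\sigma,\nu}$, and matching multiplicities by admissibility and duality over the compact group $K_M$.
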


\begin{proof}[Proof (of theorem)] During the course of the proof we will use the following notation: If $(\eta,V_{\eta})$ is a representation of $P$, let
\begin{eqnarray*}
 I_{P,\xi}^{\infty} & = & \{\phi:G\longrightarrow V_{\xi}\, | \, \phi(pg)=\delta(p)\xi(p)\phi(g)\} \\
          U_{P,\xi} & = & \{\phi \in I_{P,\xi}^{\infty}\, | \, \operatorname{supp} \phi \subset P(w^{M})^{\ast}N\}.
\end{eqnarray*}
Observe that if $\phi\in U_{P,\xi}$ then $\phi$ has compact support modulo $P$, as otherwise the support of $\phi$, which is a closed set, could not be completely contained in the open cell $P(w^{M})^{\ast}N$. Define
\[
 \Phi_{P,\xi}: Wh_{\chi}(I_{P,\xi}^{\infty}) \longrightarrow V_{\xi}'
\]
to be the map analogous to the map $\Phi_{P,\sigma,\nu}$ that was defined in the last section.

Let $L$ be such that if $\operatorname{Re} \nu > L$, then $\Phi_{P,\sigma_{\nu}}$ is an isomorphism. Observe that, by lemma \ref{lemma:hyperplane}, such an $L$ exists. We will fix such a $\nu$.

Let $(\eta,F)$ be a finite dimensional representation of $G$, and let
\[
 F_{j}=\{v \in F | h \cdot v = jv\}.
\]
Then $F=\oplus_{j=0}^{r}F_{r-2j}$, for some $r$ with $F_{r} \neq (0)$. Note that $\mathfrak{n}F_{r-2j}\subset F_{r-2(j+1)}$ and hence if we set $X_{j}= \oplus_{k=j}^{r}F_{r-2k}$, then
\begin{equation}
 F=X_{0}\supset X_{1}\supset \cdots \supset X_{r}\supset X_{r+1}=(0) \label{eq:filtration}
\end{equation}
is a $P$-invariant filtration. Let $Y^{j}=\{\phi \in F'|\phi|_{X_{j}=0}\}$, then
\[
 F'=Y^{r+1}\supset Y^{r}\supset \cdots \supset Y^{0}=(0)
\]
is the dual filtration to (\ref{eq:filtration}).

We will take $I_{P,\sigma_{\nu}}^{\infty}\otimes F$ to be the space
\[
 \{\phi:G \longrightarrow V_{\sigma_{\nu}}\otimes F |\phi(pg)=(\sigma_{\nu}(p)\otimes I)\phi(g) \}
\]
and observe that
\[
 I_{P,\sigma_{\nu}\otimes\eta}^{\infty}=\{\phi:G \longrightarrow V_{\sigma_{\nu}}\otimes F |\phi(pg)=(\sigma_{\nu}(p)\otimes \eta(p))\phi(g)\}.
\]
With this conventions there is an isomorphism of $G$-modules
\begin{eqnarray}
 I_{P,\sigma_{\nu}}^{\infty}\otimes F &\cong & I_{P,\sigma_{\nu}\otimes\eta}^{\infty} \label{eq:identification} \\
                                   \phi & \mapsto & \hat{\phi} \nonumber \\
                                   \check{\phi} & \leftarrow & \phi \nonumber
\end{eqnarray}
where
\[
 \hat{\phi}(g)=(I\otimes\eta(g))\phi(g), \qquad \mbox{and} \qquad \check{\phi}(g)=(I\otimes\eta(g)^{-1})\phi(g).
\]
Let $(\eta_{j},X_{j})$ be the restriction of $\eta$ to $P$ acting on $X_{j}$, and let $(\bar{\eta}_{j},X_{j}/X_{j+1})$ be the representation induced on the quotient. The $P$-invariant filtration given in (\ref{eq:filtration}) induces via the isomorphism in (\ref{eq:identification}) a $G$-invariant filtration
\[
 I_{P,\sigma_{\nu}\otimes\eta}^{\infty}=I_{P,\sigma_{\nu}\otimes\eta_{0}}^{\infty}\supset\ldots\supset I_{P,\sigma_{\nu}\otimes\eta_{r+1}}^{\infty}=(0)
\]
and it's easy to check that
\[
 I_{P,\sigma_{\nu}\otimes\eta_{j}}^{\infty}/I_{P,\sigma_{\nu}\otimes\eta_{j+1}}^{\infty} \cong I_{P,\sigma_{\nu}\otimes\bar{\eta}_{j}}^{\infty}.
\]

Let $\Gamma=(-1)^{r}/r!\, \prod_{j=1}^{r}(Q-j)$ as before, and define
\[
 \check{\Gamma}:Wh_{\chi}(I_{P,\sigma_{\nu}}^{\infty}) \otimes F' \longrightarrow Wh_{\chi}(I_{P,\sigma_{\nu}\otimes \eta}^{\infty})
\]
by $\check{\Gamma}(\lambda)(\phi)=\Gamma(\lambda)(\check{\phi})$. Then $\check{\Gamma}$ defines a $K_{M}$-equivariant isomorphism. Now since $\Phi_{P,\sigma_{\nu}}$ is an isomorphism, we can define $\tilde{\Gamma}$ such that the following diagram is commutative
\begin{equation}\label{diag1}
 \begin{picture}(250,155)
 \put(20,10){$V_{\sigma}'\otimes F'$}
 \put(170,65){$(V_{\sigma}\otimes F)'$}
\put(0,120){$Wh_{\chi}(I_{P,\sigma_{\nu}}^{\infty})\otimes F'$}
\put(170,120){$Wh_{\chi}(I_{P,\sigma_{\nu}\otimes\eta}^{\infty})$}
\put(175,10){$V_{\sigma}'\otimes F'$}
\put(195,113){\vector(0,-1){35}}
\put(195,58){\vector(0,-1){35}}
\put(90,125){\vector(1,0){70}}
\put(75,15){\vector(1,0){85}}
\put(40,113){\vector(0,-1){90}}
\put(120,130){$\check{\Gamma}$}
\put(120,20){$\tilde{\Gamma}$}
\put(45,65){$\Phi_{P,\sigma_{\nu}}\otimes id$}
\put(200,95){$\Phi_{P,\sigma_{\nu}\otimes \eta}$}
\end{picture}
\end{equation}

\begin{claim}\label{claim}
 $\tilde{\Gamma}$ is an isomorphism.
\end{claim}

Observe that the claim immediately implies that $\Phi_{P,\sigma_{\nu}\otimes\eta}$ is an isomorphism. We will now proceed to prove the claim.

Let $\mu\in V_{\sigma}'\otimes F'$. Since we are assuming that $\Phi_{P,\sigma_{\nu}}$ is an isomorphism, there exists $\lambda \in Wh_{\chi}(I_{P,\sigma_{\nu}}^{\infty})\otimes F'$ such that $\mu=(\Phi_{P,\sigma_{\nu}}\otimes I)(\lambda)=:\mu_{\lambda}$, i.e., if $\check{\phi}\in U_{P,\sigma_{\nu}\otimes\eta}$, then
\[
 \lambda(\phi)=\mu_{\lambda}(\int_{N}\chi(n)^{-1}\phi(w_{M}n)\, dn ).
\]
We will show that if $\mu_{\lambda}\in V_{\sigma}'\otimes Y^{j}$, then $\mu_{\lambda}-(I\otimes\eta(w_{M})^{\ast})\tilde{\Gamma}(\mu_{\lambda})\in V_{\sigma}'\otimes Y^{j-1}$, which is enough to prove the claim. Observe that for any such $\mu_{\lambda}$, $\lambda \in Wh_{\chi}(I_{P,\sigma_{\nu}}^{\infty})\otimes Y^{j}$, and hence $\Gamma(\lambda)=\lambda + \tilde{\lambda}$, with $\tilde{\lambda}\in (I_{P,\sigma_{\nu}}^{\infty})'\otimes Y^{j-1}$. Therefore if $\phi\in I_{P,\sigma_{\nu}}^{\infty} \otimes X_{j-1}$, then $\tilde{\lambda}(\phi)=0$ and
\[
 \Gamma(\lambda)(\phi)=\lambda(\phi)+\tilde{\lambda}(\phi)=\lambda(\phi).
\]
Now by definition
\begin{equation} \label{eq:Gamma-lambda}
 \check{\Gamma}(\lambda)(\hat{\phi})=\Gamma(\lambda)(\phi)=\mu_{\lambda}(\int_{N}\chi(n)^{-1}\phi(w_{M}n)\, dn ).
\end{equation}
But on the other hand since $\hat{\phi}\in U_{P,\sigma_{\nu}\otimes\eta}$
\begin{eqnarray}\label{eq:Gamma-lambda-check}
 \check{\Gamma}(\lambda)(\hat{\phi})& = &\mu_{\check{\Gamma}(\lambda)}(\int_{N}\chi(n)^{-1}\hat{\phi}(w_{M}n)\, dn ) \\ \nonumber
                                 & = & \tilde{\Gamma}(\mu_{\lambda})(\int_{N}\chi(n)^{-1}(I\otimes\eta(w_{M}n))\phi(w_{M}n)\, dn ). \nonumber
\end{eqnarray}
Let $v_{j-1}\in V_{\sigma}\otimes X^{j-1}$. We can always find $\phi\in I^{\infty}_{P,\sigma_{\nu}}\otimes X_{j-1}$ such that $\phi(w_{M})=v_{j-1}$. Let $\{u^{l}\}_{l}$ be an approximate identity on $N$, i.e., $\{u^{l}\}_{l}\subset C_{c}^{\infty}(N)$, $\int_{N} u_{l}(n) \, dn =1$ for all $l$, and
\[
 \lim_{l\rightarrow \infty} \int_{N} \phi(n)u_{l}(n) \, dn = \phi(e) \qquad \mbox{for all $\phi \in C_{c}^{\infty}(N)$.}
\]. Define $\phi^{l}\in U_{P,\sigma_{\nu}}$ by
\[
 \phi^{l}(w_{M}n)=u^{l}(n)\chi(n)\phi(w_{M}n)
\]
Then from (\ref{eq:Gamma-lambda}) and (\ref{eq:Gamma-lambda-check})
\begin{eqnarray*}
 \lim_{l\rightarrow \infty} \mu_{\lambda}(\int_{N}\chi(n)^{-1}\phi^{l}(w_{M}n)\, dn ) & = & \lim_{l\rightarrow \infty} \tilde{\Gamma}(\mu_{\lambda})(\int_{N}\chi(n)^{-1}\hat{\phi}(w_{M}n)\, dn ) \\
 \mu_{\lambda}(\lim_{l\rightarrow \infty} \int_{N}u^{l}(n)\phi(w_{M}n)\, dn ) & = & \tilde{\Gamma}(\mu_{\lambda})(\lim_{l\rightarrow \infty} \int_{N}u^{l}(n)\hat{\phi}(w_{M}n)\, dn ) \\
\mu_{\lambda}(\phi(w_{M}))& = & \tilde{\Gamma}(\mu_{\lambda})(I\otimes\eta(w_{M}))\phi(w_{M})),
\end{eqnarray*}
where in the last step we are using that $\{u^{l}\}_{l}$ is an approximate identity on $N$. We can rewrite the above equation as
\begin{equation}
 \mu_{\lambda}(v_{j-1})  = [(I\otimes\eta(w_{M}^{-1})^{\ast})\tilde{\Gamma}(\mu_{\lambda})](v_{j-1}). \label{eq:mu-lambda-v}
\end{equation}
Since equation (\ref{eq:mu-lambda-v}) holds for all $v_{j-1}\in V_{\sigma}\otimes X_{j-1}$ it follows that $\mu_{\lambda}-(I\otimes\eta(w_{M}^{-1})^{\ast})\tilde{\Gamma}(\mu_{\lambda}) \in V_{\sigma}' \otimes Y^{j-1}$ as desired, which concludes the proof of the claim \ref{claim}.

Now let
\[
 W^{j}=\{\lambda \in Wh_{\chi}(I_{P,\sigma_{\nu}\otimes\eta}^{\infty}) | \lambda|_{I_{P,\sigma_{\nu}\otimes \eta_{j}}^{\infty}}=0\}.
\]
Observe that if $\lambda \in W^{j+1}$, then $ \lambda|_{I_{P,\sigma_{\nu}\otimes \eta_{j}}^{\infty}}$ defines an element in $Wh_{\chi}(I_{P,\sigma_{\nu}\otimes \bar{\eta}_{j}}^{\infty})$. Our next goal is to define an isomorphism
\[
 \phi:Wh_{\chi}(I_{P,\sigma_{\nu}\otimes\eta}^{\infty})\longrightarrow \bigoplus_{j=0}^{r} W^{j+1}|_{I_{P,\sigma_{\nu}\otimes\eta_{j}}^{\infty}}
\]
such that the following diagram is commutative

\begin{equation}\label{diag2}
 \begin{picture}(250,250)
\put(10,200){$Wh_{\chi}(I_{P,\sigma_{\nu}\otimes\eta}^{\infty})$}
\put(180,200){$\oplus_{j=0}^{r}W^{j+1}|_{I_{P,\sigma_{\nu}\otimes\eta_{j}}^{\infty}}$}
\put(180,140){$\oplus_{j=0}^{r}Wh_{\chi}(I_{P,\sigma_{\nu}\otimes \bar{\eta}_{j}}^{\infty})$}
\put(180,80){$\oplus_{j=0}^{r} V_{\sigma\otimes \bar{\eta}_{j}}'$}
\put(180,20){$(V_{\sigma}\otimes F)'$}
\put(85,203){\vector(1,0){90}}
\put(210,190){\vector(0,-1){35}}
\put(210,130){\vector(0,-1){35}}
\put(210,70){\vector(0,-1){35}}
\put(40,190){\vector(1,-1){155}}
\put(130,210){$\phi$}
\put(120,120){$\Phi_{P,\sigma_{\nu}\otimes\eta}$}
\put(220,110){$\Phi_{P,\sigma_{\nu}\otimes\bar{\eta}_{j}}$}
\put(220,50){$X_{j}/X_{j+1}\cong F_{r-2j}$}
\end{picture}
\end{equation}

If we can find such an isomorphism, then $\Phi_{P,\sigma_{\nu}\otimes\bar{\eta}_{j}}$ would be an isomorphism for all $j$. In particular if $(\eta,F)$ is a representation such that the action of $M$ on $F_{r}$ is trivial, then $\sigma_{\nu}\otimes\bar{\eta}_{r} \cong \sigma_{\nu-rh'}$ and hence $\Phi_{P,\sigma_{\nu-rh'}}$ would be an isomorphism. By iterating this whole argument it is easy to see that this would imply that $\Phi_{P,\sigma_{\nu}}$ is an isomorphism for all $\nu$. We will now proceed to the definition of the isomorphism $\Phi$.

Let $\phi\in I_{P,\sigma_{\nu}\otimes\eta}^{\infty}$, and define $\phi_{j}:G\longrightarrow V_{\sigma}\otimes X_{r-2j}$ by
\[
 \phi_{j}(k)=p_{j}(\phi(k)) \qquad \forall k \in K,
\]
where $p_{j}:V_{\sigma}\otimes F \longrightarrow V_{\sigma}\otimes F_{r-2j}$ is the natural projection. Since $p_{j}$ commutes with the action of $K_{M}$, $\phi_{j}|_{K}\in I_{M\cap K, \sigma_{M \cap K}\otimes\eta_{j}}^{\infty}$. It is clear that $\phi|_{K}=\sum_{j=0}^{r}\phi_{j}|_{K}$. Now given $\lambda \in Wh_{\chi}(I_{P,\sigma_{\nu}\otimes\eta}^{\infty})$, define $\lambda_{j}$ by setting $\lambda_{j}(\phi)=\lambda(\phi_{j})$. Then $\lambda = \sum_{j=0}^{r}\lambda_{j}$, and $\lambda_{j}\in W^{j+1}$ for all $j$. Define
\[
 \phi(\lambda)=(\tilde{\lambda}_{0},\ldots,\tilde{\lambda}_{r}), \qquad \mbox{with $\tilde{\lambda}_{j}=\lambda_{j}|_{I_{P,\sigma_{\nu}\otimes\eta_{j}}^{\infty}}$.}
\]
From this definitions it is clear that $\phi$ is an isomorphism, and besides if $\phi\in U_{P,\sigma_{\nu}\otimes\eta}$, then
\begin{eqnarray}
 \lambda(\phi) & = & \sum_{j=0}^{r}\lambda_{j}(\phi_{j})=\sum_{j=0}^{r}\tilde{\lambda}_{j}(\phi_{j}) \nonumber \\
\mu_{\lambda}(\int_{N}\chi(n)^{-1}\phi(w_{M}n)\, dn ) & = & \sum_{j=0}^{r}\mu_{\tilde{\lambda}_{j}}(\int_{N}\chi(n)^{-1}\phi_{j}(w_{M}n)\, dn ). \label{eq:mu-lambda-tilde}
\end{eqnarray}
Let $v\in V_{\sigma}\otimes F$, and let $v_{j}=p_{j}(v)$. Lets assume that $\phi(w_{M})=v$ and let $\{u^{l}\}_{l}$ be an approximate identity on $N$.
 Set $\phi^{l}(w_{M}n)=u^{l}(n)\chi(n)\phi(w_{M}n)$. Then from equation (\ref{eq:mu-lambda-tilde})

\begin{eqnarray}
 \lim_{l\rightarrow \infty}\mu_{\lambda}(\int_{N}\chi(n)^{-1}\phi^{l}(w_{M}n)\, dn )  & = & \lim_{l\rightarrow \infty} \sum_{j=0}^{r}\mu_{\tilde{\lambda}_{j}}(\int_{N}\chi(n)^{-1}\phi_{j}^{l}(w_{M}n)\, dn )  \nonumber \\
\mu_{\lambda}(\lim_{l\rightarrow \infty}  \int_{N} u^{l}(n)\phi(w_{M}n)\, dn ) & = & \sum_{j=0}^{r}\mu_{\tilde{\lambda}_{j}}( \lim_{l\rightarrow \infty} \int_{N}u^{l}(n)\phi_{j}(w_{M}n)\, dn ) \nonumber \\
\mu_{\lambda}(\phi(w_{M}))& = &\sum_{j=0}^{r}\mu_{\tilde{\lambda}_{j}}(\phi_{j}(w_{M})\nonumber \\
\mu_{\lambda}(v)& = &\sum_{j=0}^{r}\mu_{\tilde{\lambda}_{j}}(v_{j}).
\end{eqnarray}
Which shows that the diagram (\ref{diag2}) is commutative, which implies that  $\Phi_{P,\sigma_{\nu-r}}$ is an isomorphism. Iterating this argument we can show that $\Phi_{P,\sigma,\nu}$ is an isomorphism for all $\nu \in \mathfrak{a}'$ finishing the proof of (i).

We will now prove (ii). Define
\[
 \gamma_{\mu}:\mathfrak{a}'\longrightarrow (I^{\infty}_{K\cap M,\sigma|_{K\cap M}})'
\]
by $\gamma_{\mu}(\nu)(\phi)=\Phi_{P,\sigma,\nu}^{-1}(\mu)(\phi_{P,\sigma,\nu})$. We want to prove that $\gamma_{\mu}$ is weakly holomorphic, i.e., for all $\phi\in I^{\infty}_{K\cap M,\sigma|_{K\cap M}}$, the map $\nu \mapsto \gamma_{\mu}(\nu)(\phi)$ is holomorphic. It follows immediately from lemma \ref{lemma:hyperplane} that if $\operatorname{R}\nu > q_{\sigma}$, then $\gamma_{\mu}(\nu)(\phi)=\mu\circ J_{P,\sigma,\nu}^{\chi}(\phi)$ and hence $\gamma_{\mu}$ is weakly holomorphic in this region. To show that $\gamma_{\mu}$ is weakly holomorphic everywhere, we will make use of the arguments given in the proof of (i) to implement a shift equation for $\gamma_{\mu}$.

Let $\nu\in \mathfrak{a}'$ and $\phi\in I^{\infty}_{K\cap M,\sigma|_{K\cap M}}$ be arbitrary. By definition
\[
 \gamma_{\mu}(\nu-r)(\phi)=\Phi_{P,\sigma,\nu-r}^{-1}(\mu)(\phi_{P,\sigma,\nu-r})=\lambda(\phi_{P,\sigma,\nu-r})
\]
for some $\lambda \in Wh_{\chi}(I_{P,\sigma,\nu-r}^{\infty})$. Let $\{v_{j}\}_{j=1}^{m}$ be a basis of $F$, and let $\{l_{j}\}_{j=1}^{m}$ be its dual basis. Chasing the diagrams (\ref{diag1}) and (\ref{diag2}) we can define $\eta_{j}\in Wh_{\chi}(I_{P,\sigma,\nu}^{\infty})$, $j=1,\ldots,m$, and $h\in I_{P,\sigma_{\nu}\otimes\eta}$ such that
\begin{eqnarray}
 \gamma_{\mu}(\nu-r)(\phi)& = &\lambda(\phi_{P,\sigma,\nu-r})=\check{\Gamma}(\sum \eta_{j}\otimes l_{j})(h) \nonumber\\
                       & = & \Gamma(\sum \eta_{j} \otimes l_{j})(\check{h}) \nonumber \\
                       & = & (\sum \eta_{j} \otimes l_{j})(\Gamma^{T}\check{h}). \label{eq:nu-r}
\end{eqnarray}
Now since $\Gamma^{T}\check{h} \in I_{P,\sigma,\nu}^{\infty}\otimes F$ we can find $\phi_{j}\in I^{\infty}_{K\cap M,\sigma|_{K\cap M}}$, $j=1,\ldots,m$, such that
\[
 \Gamma^{T}\check{h}=\sum (\phi_{j})_{P,\sigma,\nu} \otimes v_{j}.
\]
Hence
\begin{equation}\label{eq:nu}
 \check{\Gamma}(\sum \lambda_{j}\otimes l_{j})(h)= \sum \eta_{j}((\phi_{j})_{P,\sigma,\nu})=\sum \mu_{\eta_{j}}\circ J_{P,\sigma,\nu}^{\chi}(\phi_{j})=\sum \gamma_{\eta_{j}}(\nu)(\phi_{j}).
\end{equation}
Then by (\ref{eq:nu-r}) and (\ref{eq:nu})
\begin{equation}\label{eq:functionaleq}
 \gamma_{\mu}(\nu-r)(\phi) = \sum \gamma_{\eta_{j}}(\nu)(\phi_{j}).
\end{equation}
This is the desired shift equation which shows that $\gamma_{\mu}$ is weakly holomorphic everywhere.
\end{proof}

\medskip
\section{Bessel models for admissible representations.}

In this section we derive an estimate on the dimension of the space
$Wh_{\tau,\chi}(V)$ for $(\pi,V)$ an admissible finitely generated smooth
Fr\'{e}chet representation of $G$ as a corollary to Corollary 13.\medskip\ Let
$V_{K}$ deonote the underlying $(\mathfrak{g},K)$ module of $K$ finite vectors
in $V$. Let $\widetilde{V_{K}}$ be the admissible dual module of $V_{K}$ and
let $\widetilde{W_{K}}$ be an irreducible $(Lie(P),K_{M})$ ($K_{M}=K\cap M$)
quotient of $\widetilde{V_{K}}/\mathfrak{n}\widetilde{V_{K}}$. Then Frobenius
reciprocity implies that there is an injective homorphism of $\widetilde{V_{K}%
}$ into the $K$-finite induced module from $P$ to $G$ of $\widetilde{W_{K}}$.
Let $W_{K}$ denote the $K\cap M$ finite dual module to $\widetilde{W_{K}}$.
Then we have a homomorphism of the $K$-finite induced representation from $P$
to $G$ of $\delta_{P}\otimes W_{K}$ onto $V_{K}$. Let $(\sigma,H_{\sigma})$
denote the canonical completion of $\delta_{P}^{\frac{1}{2}}\otimes W_{K}$ as
a smooth Fr\'{e}chet module of moderate growth. Then the Casselman-Wallach
theorem implies that there exists a continuous surjective $G$-homorphism, $T$,
of $I_{P,\sigma}^{\infty}$ onto $V$. Let $T^{\prime}$ be the adjoint of $T$
mapping $V^{\prime}$ into $\left(  I_{P,\sigma}^{\infty}\right)  ^{\prime}$
since $T$ is surjective, $T^{\prime}$ is injective. With this notation in hand
Corollary 13 implies

\begin{proposition}
Let $V$ be an irreducible, smooth. Fr\'{e}chet module of moderate growth for
$G$. Let $(\tau,F_{\tau})$ be an irreducible represenation of $K_{M}$, let
$\chi$ be a unitary character of $N$ such that $M_{\chi}=K_{M}$ then
\[
\dim Wh_{\chi,\tau}(V)\leq\dim Hom_{K_{M}}(W_{K},F_{\tau})
\]
where $W_{K}$ is the $K_{M}$ finite dual module of any irreducible
$(Lie(M),K_{M})$ quotient of $V_{K}/\mathfrak{n}V_{K}$.
\end{proposition}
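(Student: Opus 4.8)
The plan is to deduce this proposition directly from Corollary 13 together with the setup paragraph that precedes the statement. By that paragraph, the Casselman--Wallach theorem provides a continuous surjective $G$-homomorphism $T:I_{P,\sigma}^{\infty}\to V$, where $(\sigma,H_{\sigma})$ is the canonical completion of $\delta_{P}^{1/2}\otimes W_{K}$ as a smooth Fr\'echet module of moderate growth of $M$; here $\sigma$ restricted to $K_{M}$ is, up to the unitary twist, just $W_{K}$ as a $K_{M}$-module. The adjoint $T':V'\to (I_{P,\sigma}^{\infty})'$ is injective, and it is visibly $N$-equivariant and $K_{M}$-equivariant since $T$ is $G$-equivariant. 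Therefore $T'$ carries $Wh_{\chi,\tau}(V)$ injectively into $Wh_{\chi,\tau}(I_{P,\sigma}^{\infty})$: given a continuous linear $S:V\to F_{\tau}$ with $S(\pi(kn)\phi)=\chi(n)^{-1}\tau(k)S(\phi)$, the composite $S\circ T:I_{P,\sigma}^{\infty}\to F_{\tau}$ has the same covariance property, and $S\mapsto S\circ T$ is injective because $T$ has dense (indeed full) image.

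First I would make precise the identification of $\sigma|_{K_{M}}$. Since $A\subset M$ acts trivially on $K_{M}$ and $\delta_{P}|_{K_{M}}=1$, the $K_{M}$-module underlying $H_{\sigma}$ is the completion of $W_{K}$; more to the point, $\mathrm{Hom}_{K_{M}}(H_{\sigma},F_{\tau})=\mathrm{Hom}_{K_{M}}(W_{K},F_{\tau})$ because $F_{\tau}$ is finite dimensional and irreducible and $W_{K}$ is the $K_{M}$-finite dual of $\widetilde{W_{K}}$ (one uses that for a smooth Fr\'echet $M$-module the continuous $K_{M}$-maps to a finite-dimensional representation factor through the space of $K_{M}$-finite vectors, with multiplicities matching those of the Harish-Chandra module). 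Then I would invoke Corollary 13 with $\nu$ chosen so that $\sigma_{\nu}$ is exactly the completion above (i.e.\ absorbing the $\delta_{P}^{1/2}$ normalization into the definition of $I_{P,\sigma}^{\infty}$ used in the Corollary), giving
\[
\dim Wh_{\chi,\tau}(I_{P,\sigma,\nu}^{\infty})=\dim \mathrm{Hom}_{K_{M}}(V_{\sigma},F_{\tau})=\dim \mathrm{Hom}_{K_{M}}(W_{K},F_{\tau}).
\]
Combining this with the injection of the previous paragraph yields $\dim Wh_{\chi,\tau}(V)\le \dim \mathrm{Hom}_{K_{M}}(W_{K},F_{\tau})$, which is the assertion, with $W_{K}$ the $K_{M}$-finite dual of the chosen irreducible quotient of $V_{K}/\mathfrak{n}V_{K}$.

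The routine points are the continuity and equivariance of $T'$ and the injectivity of $S\mapsto S\circ T$, both of which are immediate from surjectivity of $T$ and the definitions. The one genuine wrinkle — which I expect to be the main thing to get right rather than a deep obstacle — is the bookkeeping in the setup: verifying that the Harish-Chandra module $\widetilde{V_{K}}$ really does embed, by Frobenius reciprocity, into the $K$-finite induction from $P$ of an irreducible $(\mathrm{Lie}(P),K_{M})$-quotient $\widetilde{W_{K}}$ of $\widetilde{V_{K}}/\mathfrak{n}\widetilde{V_{K}}$, so that dualizing gives the surjection of the $K$-finite induction of $\delta_{P}\otimes W_{K}$ onto $V_{K}$; and then checking that the hypothesis $M_{\chi}=K_{M}$ is exactly what makes $\chi$ a character of $N$ with compact stabilizer equal to $K_{M}$, so that Corollary 13 applies verbatim. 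I would also remark that the inequality can be strict precisely because $T'$ need not be surjective onto $Wh_{\chi,\tau}(I_{P,\sigma,\nu}^{\infty})$ — not every Bessel functional on the induced representation need descend to $V$.
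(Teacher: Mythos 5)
Your proposal is correct and follows essentially the same route as the paper: the paper's entire proof consists of the setup paragraph immediately preceding the proposition (producing the surjection $T:I_{P,\sigma}^{\infty}\to V$ via Frobenius reciprocity and Casselman--Wallach, hence the injection $T'$) together with the one-line remark ``With this notation in hand Corollary 13 implies'' the stated inequality. You have simply made explicit the details the paper leaves implicit, namely the $N$- and $K_{M}$-equivariance of $S\mapsto S\circ T$, its injectivity from the surjectivity of $T$, and the identification $\dim\mathrm{Hom}_{K_{M}}(V_{\sigma},F_{\tau})=\dim\mathrm{Hom}_{K_{M}}(W_{K},F_{\tau})$ after absorbing the $\delta_{P}^{1/2}$ twist.
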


This result expresses an interesting relationship between Bessel models and
conical models (i.e. imbeddings into principal series).

\medskip
\begin{center}
{\Large References}
\end{center}

\medskip
\noindent\lbrack JSZ] D. Jiang, B. Sun, C, Zhu, Uniqueness of Bessel models:
the Archimedean Case, To Appear, Geometric and Functional Analysis.

\noindent\lbrack KV] J.A.C. Kolk, V.S. Varadarajan, On the transverse symbol
of vectorial distributions and some applications to harmonic analysis, Indag.
Math., 7 (1996), 67-96.

\noindent\lbrack L] E. Lynch, Generalized Whittaker vectors and representation
theory, Thesis, M.I.T., 1979.

\noindent\lbrack S] C.L. Siegel, Symplectic Geometry, Academic Press, New
York, 1964.

\noindent\lbrack W1] Nolan R. Wallach, Lie algebra cohomology and holomorphic
continuation of generalized Jacquet integrals, Representations of Lie Groups
Kyoto, Hiroshima, 1986,123-151, Adv. Stud. Pure Math., 14, Academic Press,
Boston, MA 1988.

\noindent\lbrack W2] Nolan R. Wallach, Holomorphic continuation of generalized
Jacquet integrals for degenerate principal series, Represent. Theory, 10,
(2006), 380-398.

\noindent\lbrack W3] Nolan R. Wallach, Real Reductive Groups 1, Academic
Press, Boston, 1988.

\noindent\lbrack W4] Nolan R. Wallach, Real Reductive Groups 2, Academic
Press, Boston, 1992.

\noindent\lbrack W5] Nolan R. Wallach, Generalized Whittaker models for
holomomprhic and quaternionic representations, Comment. Math. Helv. 78 (2003), 266-307.

\end{document}